\documentclass{article}
\usepackage{PRIMEarxiv}
\usepackage[T1]{fontenc}
\usepackage[utf8]{inputenc}
\usepackage{lmodern}
\usepackage{amsmath,amssymb,amsthm,mathtools}
\usepackage{microtype}
\usepackage{enumitem}
\usepackage{xcolor}
\usepackage{hyperref}
\usepackage{tikz}
\usepackage{pgfplots}
\usepackage{tikz-3dplot}
\usetikzlibrary{arrows.meta,calc,patterns,positioning}
\pgfplotsset{compat=1.18}
\hypersetup{colorlinks=true,linkcolor=blue!50!black,urlcolor=blue!50!black,citecolor=blue!50!black}
\definecolor{hvblue}{RGB}{36,102,204}
\definecolor{magred}{RGB}{196,51,58}
\definecolor{softgray}{RGB}{245,246,248}

\newtheorem{theorem}{Theorem}[section]
\newtheorem{proposition}[theorem]{Proposition}
\newtheorem{corollary}[theorem]{Corollary}
\newtheorem{lemma}[theorem]{Lemma}
\newtheorem{conjecture}[theorem]{Conjecture}
\theoremstyle{definition}
\newtheorem{definition}[theorem]{Definition}
\newtheorem{remark}[theorem]{Remark}
\newtheorem{example}[theorem]{Example}

\newcommand{\Mag}{\operatorname{Mag}}
\newcommand{\HV}{\operatorname{HV}}
\newcommand{\R}{\mathbb{R}}
\newcommand{\1}{\mathbf{1}}
\newcommand{\proj}{\pi}
\newcommand{\vol}{\operatorname{vol}}

\newcommand{\myparagraph}[1]{\medskip\noindent\textbf{#1}}

\title{The Magnitude of Dominated Sets: A Pareto Compliant Indicator Grounded in Metric Geometry}
\author{Michael T.M. Emmerich\\Faculty of Information Technology, University of Jyv\"askyl\"a, Finland}
\date{}

\begin{document}
\maketitle

\begin{abstract}
We investigate \emph{magnitude} as a new unary and strictly Pareto-compliant quality indicator for finite approximation sets to the Pareto front in multiobjective optimization. Magnitude originates in enriched category theory and metric geometry, where it is a notion of size or point content for compact metric spaces and a generalization of cardinality. For dominated regions in the \(\ell_1\) box setting, magnitude is close to hypervolume but not identical: it contains the top-dimensional hypervolume term together with positive lower-dimensional projection and boundary contributions.

This paper gives a first theoretical study of magnitude as an indicator. We consider multiobjective maximization with a common anchor point. For dominated sets generated by finite approximation sets, we derive an all-dimensional projection formula, prove weak and strict set monotonicity on finite unions of anchored boxes, and thereby obtain weak and strict Pareto compliance. Unlike hypervolume, magnitude assigns positive value to boundary points sharing one or more coordinates with the anchor point, even when their top-dimensional hypervolume contribution vanishes. We then formulate projected set-gradient methods and compare hypervolume and magnitude on biobjective and three-dimensional simplex examples. Numerically, magnitude favors boundary-including populations and, for suitable cardinalities, complete Das-Dennis grids, whereas hypervolume prefers more interior-filling configurations. Computationally, magnitude reduces to hypervolume on coordinate projections; for fixed dimension this yields the same asymptotic complexity up to a factor \(2^d-1\), and in dimensions two and three \(\Theta(n\log n)\) time. These results identify magnitude as a mathematically natural and computationally viable alternative to hypervolume for finite Pareto front approximations.

\textbf{Keywords:} magnitude of metric spaces, metric geometry, Pareto dominance, Pareto compliance, hypervolume indicator, generalized cardinality, multiobjective optimization, unary quality indicators
\end{abstract}

\section{Introduction}
The approximation of Pareto fronts by finite sets is a central theme in multiobjective optimization. A natural way to compare such approximations is to assign to each finite set a numerical value that measures the size or quality of the dominated region induced by that set. This leads to the study of \emph{set indicators}, among which the hypervolume indicator is the best-known example; see Miettinen~\cite{Miettinen1999} for a general introduction to nonlinear multiobjective optimization, Zitzler et al.~\cite{Zitzler2003} for the comparison and assessment of Pareto front approximations, and Emmerich and Deutz~\cite{EmmerichDeutz2018} for a tutorial on the finite-set approximation viewpoint and the basic indicator concepts used in this paper. In this paper we investigate a different indicator, namely the \emph{magnitude of the dominated set}, motivated by the notion of magnitude from enriched categories and compact metric spaces, where it appears as a generalized notion of cardinality or point content. Our perspective is theoretical: we study the structural properties of this indicator for finite Pareto front approximations in multiobjective maximization, with particular emphasis on set monotonicity, Pareto compliance, and the geometry of indicator-optimal point distributions. Thus, the paper is situated at the interface of indicator theory, evolutionary and set-oriented multiobjective optimization, and geometric notions of size. Hypervolume-based archiving and heuristic selection have also played an important role in indicator-based evolutionary search~\cite{KnowlesCorneFleischer2003,Fleischer2003}.

Throughout the paper, objective vectors are viewed as elements of subsets of \(\R^d\). A finite \emph{approximation set} is a finite set \(A\subseteq \R^d\) intended to approximate the Pareto front in objective space. Since we consider maximization problems, an objective vector \(y\in\R^d\) \emph{weakly dominates} \(z\in\R^d\) if \(y_i\ge z_i\) for all \(i=1,\dots,d\), and \emph{strictly dominates} it if, in addition, \(y_j>z_j\) for at least one index \(j\). The \emph{Pareto front} is the set of nondominated attainable objective vectors. Whenever an objective map is introduced, we write it as \(F=(F_1,\dots,F_d):\Omega	\rightarrow \R^d\), so that uppercase \(F_i\) always denote objective functions, while lowercase symbols are reserved for points or coordinates in objective space.

Magnitude was introduced in enriched category theory and has developed into a geometric notion of size for compact metric spaces~\cite{Leinster2013,LeinsterMeckes2017,LeinsterTalk}. Depending on the setting, it behaves like effective number of points, point content, or a cardinality-like invariant. For compact subsets of $\ell_1^n$, and in particular for unions of axis-parallel boxes, magnitude has a remarkable structure: it combines the top-dimensional volume term with positive lower-dimensional boundary, projection, and shadow terms. This immediately suggests a comparison with hypervolume in multiobjective optimization. Hypervolume measures only the dominated volume, whereas magnitude measures the size of the same dominated region in a richer geometric sense.

The central motivation of this paper is therefore fundamental: \emph{if a finite Pareto approximation is viewed as generating a dominated set, then magnitude provides another principled way of quantifying the size of that set}. Because magnitude generalizes cardinality in metric spaces while also incorporating geometric content, it is a natural candidate for a new indicator that is similar to hypervolume but not the same. The present paper is a first theoretical investigation of that idea, with special emphasis on dominated-set structure, monotonicity, and Pareto compliance.

Our study focuses on dominated regions of the form $D(A)=\bigcup_{a\in A}[0,a]$ and on low-dimensional box geometries where exact analysis is still possible. In two dimensions we first work with zero-anchored unions of rectangles and show how magnitude decomposes into hypervolume plus positive lower-dimensional terms. We then extend the discussion to dominated sets in arbitrary dimension by expressing magnitude as a positive weighted sum of measures of coordinate projections. This yields strict set monotonicity on the class of finite unions of anchored boxes and therefore Pareto compliance for dominated-set comparisons. It also explains why magnitude should put more emphasis on extreme and boundary points than hypervolume: points that share coordinates with the anchor point still contribute to magnitude through lower-dimensional terms, whereas their top-dimensional hypervolume contribution vanishes. We then derive set-gradient methods for both indicators and compare their behavior on two biobjective examples.

\begin{example}[A three-point dominated set in two objectives]\label{ex:intro-three-point-mag}
Consider the three objective vectors
\[
(1,3),\qquad (3,2),\qquad (5,1)
\]
in a biobjective \emph{maximization} problem with anchor point $(0,0)$. The corresponding dominated set is the union of the three anchored rectangles
\[
[0,1]\times[0,3],\qquad [0,3]\times[0,2],\qquad [0,5]\times[0,1].
\]
Figure~\ref{fig:intro-three-point-mag} shows the resulting dominated region. Since the union is zero-anchored and planar, Theorem~\ref{thm:planar-dominated-magnitude} below yields
\[
\Mag(D)=1+\frac{X+Y}{2}+\frac{\HV(D)}{4}.
\]
Here $X=5$, $Y=3$, and the dominated area is
\[
\HV(D)=3\cdot 1+2\cdot(3-1)+1\cdot(5-3)=9.
\]
Therefore
\[
\Mag(D)=1+\frac{5+3}{2}+\frac{9}{4}=\frac{29}{4}.
\]
This simple example already illustrates the central difference from hypervolume: the magnitude contains the top-dimensional dominated area, but also positive lower-dimensional contributions coming from the coordinate shadows of the dominated set.

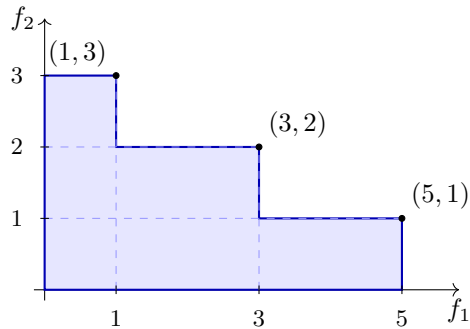
\begin{figure}[t]
\centering
\begin{tikzpicture}[x=0.95cm,y=0.95cm]
  % axes
  \draw[->] (-0.15,0) -- (5.8,0) node[below] {$f_1$};
  \draw[->] (0,-0.15) -- (0,3.8) node[left] {$f_2$};

  % dominated union fills
  \fill[blue!10] (0,0) rectangle (1,3);
  \fill[blue!10] (0,0) rectangle (3,2);
  \fill[blue!10] (0,0) rectangle (5,1);

  % boundary of union
  \draw[thick,blue!70!black] (0,0) -- (5,0) -- (5,1) -- (3,1) -- (3,2) -- (1,2) -- (1,3) -- (0,3) -- cycle;

  % individual rectangle hints
  \draw[blue!40,dashed] (1,0) -- (1,3);
  \draw[blue!40,dashed] (3,0) -- (3,2);
  \draw[blue!40,dashed] (0,1) -- (5,1);
  \draw[blue!40,dashed] (0,2) -- (3,2);

  % points
  \filldraw[black] (1,3) circle (1.1pt) node[above left] {$(1,3)$};
  \filldraw[black] (3,2) circle (1.1pt) node[above right] {$(3,2)$};
  \filldraw[black] (5,1) circle (1.1pt) node[above right] {$(5,1)$};

  % ticks
  \foreach \x in {1,3,5} \draw (\x,0.06) -- (\x,-0.06) node[below=3pt] {\small \x};
  \foreach \y in {1,2,3} \draw (0.06,\y) -- (-0.06,\y) node[left=3pt] {\small \y};
\end{tikzpicture}
\caption{Dominated set generated by an approximation set $A=\{(1,3), (3,2), (5,1)\}$ consisting of three objective vectors  with anchor point $(0,0)$. The dominated rectangles are zero-anchored, and their union has hypervolume $9$ and magnitude $29/4$.}
\label{fig:intro-three-point-mag}
\end{figure}
\end{example}

The three-dimensional simplex case sharpens the contrast further. It also places the present study next to existing work on optimal indicator distributions and analytical hypervolume calculations on linear and quadratic fronts~\cite{Brockhoff2010,IshibuchiImadaMasuyamaNojima2019,Singh2025,IshibuchiNanPang2025}. There, exact inclusion--exclusion formulas remain tractable, allowing us to compare hypervolume and magnitude on symmetric six-point populations and on complete Das--Dennis grids, i.e.\ simplex-lattice parameter sets in the sense of Das and Dennis~\cite{DasDennis1998}. The numerical evidence indicates that magnitude tends to converge to populations that include the boundary and, for appropriate cardinalities, to regular Das--Dennis grids, while hypervolume keeps rewarding more interior-filling layouts. This leads to an intriguing stationarity phenomenon for magnitude on the simplex and suggests new connections between indicator theory, dominated-set geometry, and reference-direction constructions.

Throughout this paper we consider multiobjective \emph{maximization} problems. We use the term \emph{anchor point} rather than \emph{reference point}. Our canonical choice is the origin $0=(0,\dots,0)$, which is assumed to lie below the objective vectors under consideration and may be interpreted as a nadir-type anchor point. This normalization entails no loss of generality, since any other common anchor point can be shifted to the origin by coordinate-wise translation of the dominated set.

\paragraph{Contributions.}
The main contributions of this paper are as follows.
\begin{enumerate}[leftmargin=2em]
\item We present a self-contained low-dimensional theory of magnitude for dominated regions in the $\ell_1$ box setting and make its relation to hypervolume explicit.
\item We prove an all-dimensional projection formula, strict set monotonicity for finite unions of anchored boxes, and therefore Pareto compliance for magnitude on dominated sets.
\item We derive projected set-gradient methods for hypervolume and magnitude, using normalized per-point gradients in the numerical runs.
\item We show numerically that magnitude differs systematically from hypervolume by favoring boundary-including populations; in three dimensions, magnitude aligns naturally with complete Das--Dennis grids for suitable population sizes.
\item We derive exact inclusion--exclusion formulas for the three-dimensional simplex setting, study exact small-cardinality configurations, and sketch the route to higher-dimensional magnitude.
\end{enumerate}

The remainder of the paper is organized as follows. Section~2 recalls the magnitude background needed in the $\ell_1$ setting. Section~3 develops the low-dimensional formulas and their relation to hypervolume. Section~4 establishes weak and strict set monotonicity and derives weak and strict Pareto compliance on dominated sets. Section~5 introduces the projected set-gradient framework. Sections~6--9 treat the two biobjective examples and the corresponding exact indicator-optimal distributions. Sections~10--12 turn to the three-dimensional simplex setting and analyze the exact inclusion--exclusion formulas, small-cardinality configurations, and the Das--Dennis stationarity phenomenon. Section~13 discusses projection-based computation of magnitude. Section~14 discusses possible advantages and disadvantages of magnitude as an indicator. We close in Section~15 with conclusions and future directions.

\section{Preliminaries on magnitude in \texorpdfstring{$\ell_1$}{l1}}

We begin by recalling the basic facts about magnitude that will be used throughout the paper. Since the notion is less familiar in optimization than hypervolume, we start with some intuition. Magnitude is a notion of the \emph{size} of a metric space. For finite spaces, it is built from the matrix
\[
\bigl(e^{-d(a_i,a_j)}\bigr)_{i,j},
\]
so the interaction between two points decreases exponentially with their distance. Heuristically, the exponential is natural here because it is the basic function that turns sums of distances into products:
\[
e^{-(x+y)}=e^{-x}e^{-y}.
\]
More generally, among continuous functions \(f\) with \(f(0)=1\) and \(f(x+y)=f(x)f(y)\), the exponential is the canonical choice. Thus the entries \(e^{-d(a_i,a_j)}\) encode how strongly different points ``overlap'' as parts of the same metric object. Closely spaced points interact strongly, while far apart points behave more like separate contributions to size.

Our focus is on the \(\ell_1\) setting. Here \(\ell_1\) means that for points
\[
x=(x_1,\dots,x_n),\qquad y=(y_1,\dots,y_n)\in \R^n,
\]
the distance is
\[
d_{\ell_1}(x,y)=\sum_{i=1}^n |x_i-y_i|.
\]
This metric is especially well suited to the present paper because the dominated regions relevant to multiobjective optimization are unions of axis-aligned boxes, and in \(\ell_1\) such boxes admit particularly explicit formulas. In particular, products behave cleanly, and low-dimensional magnitude formulas become directly comparable to hypervolume plus lower-dimensional shadow terms. This makes \(\ell_1\) much more tractable here than a general compact metric-space setting.

\myparagraph{Finite metric spaces and compact sets}
\begin{definition}
Let \(A=\{a_1,\dots,a_m\}\) be a finite metric space. Its similarity matrix is
\[
Z_A=(e^{-d(a_i,a_j)})_{i,j=1}^m.
\]
If \(Z_A\) is invertible, the \emph{magnitude} of \(A\) is
\[
\Mag(A)=\sum_{i,j=1}^m (Z_A^{-1})_{ij}.
\]
Equivalently, if \(w\in\R^m\) satisfies \(Z_A w=\1\), then
\[
\Mag(A)=\sum_{i=1}^m w_i.
\]
\end{definition}

The second formulation is often useful heuristically: the entries of \(w\) may be viewed as weights assigned to the points so that every point sees total weighted similarity exactly equal to \(1\). The magnitude is then the total weight of the space. In this sense, magnitude behaves like a generalized effective cardinality.

For compact positive-definite metric spaces, including compact subsets of \(\ell_1^n\), magnitude is extended by approximation by finite subsets. We keep the same notation \(\Mag(A)\) for the compact case.

\myparagraph{Intervals and products}

The first examples already show why \(\ell_1\) is so convenient. On an interval, magnitude is linear in the length, and on Cartesian products it is multiplicative. These two facts together will later give explicit formulas for rectangles and boxes.

\begin{proposition}[interval formula]
For an interval \([0,L]\subset \R\) with the induced \(\ell_1\) metric,
\[
\Mag([0,L])=1+\frac{L}{2}.
\]
\end{proposition}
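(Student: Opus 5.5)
The plan is to compute magnitude on finite subsets of \([0,L]\) exactly and then pass to the limit, using the definition of magnitude for compact positive-definite spaces as the limit over finite approximating subsets.

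\textbf{Step 1: equally spaced points.} Take \(A_m=\{x_0<x_1<\dots<x_m\}\subset[0,L]\) with \(x_0=0\), \(x_m=L\), and gaps \(t_k=x_k-x_{k-1}\). On the line the similarity matrix has entries \(e^{-|x_i-x_j|}\), which factor as products of the consecutive factors \(q_k=e^{-t_k}\). I will guess a weighting \(w\) with \(Z_{A_m}w=\1\) and verify it. The candidate is the classical one for finite subsets of \(\R\):
\[
w_0=w_m=\frac{1}{1+q_1}\ \text{(resp. }q_m\text{)},\qquad
w_k=\frac{1}{1+q_k}+\frac{1}{1+q_{k+1}}-1 .
\]
Verification reduces to a telescoping computation. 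For a point \(x_i\), split \(\sum_j w_j e^{-|x_i-x_j|}\) into the points to the left and the points to the right. Using \(\frac{1-q}{1+q}=\tanh(t/2)\) together with the multiplicative structure of the exponentials, each side sums to a telescoping expression. The two sides combine to \(1\).

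\textbf{Step 2: sum the weights.} This gives
\[
\Mag(A_m)=1+\sum_{k=1}^m\frac{1-q_k}{1+q_k}=1+\sum_{k=1}^m\tanh(t_k/2).
\]

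\textbf{Step 3: pass to the limit.} Refine the partition so that \(\max t_k\to0\). Since \(\tanh(t/2)=t/2+O(t^3)\), the sum converges to \(\sum t_k/2=L/2\). Hence \(\Mag([0,L])=1+L/2\).

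If the compact-case definition is taken as a supremum over finite subsets rather than a limit along a chosen sequence, one more observation is needed. The expression \(1+\sum\tanh(t_k/2)\) increases under refinement, because \(\tanh\) is subadditive on \([0,\infty)\), and it is bounded above by \(1+L/2\). Moreover, any finite subset of \([0,L]\) can be enlarged to include both endpoints without decreasing the value. So the supremum also equals \(1+L/2\).

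\textbf{Main obstacle.} The only real work is the verification in Step 1 that the proposed weights solve \(Z_{A_m}w=\1\), particularly the boundary rows \(i=0\) and \(i=m\). I would handle this by induction on \(m\): adding a point at the right end changes only the last two weights, and the needed identity can be checked directly. Alternatively, one can verify the \(2\)-point case and invoke the fact that for points on a line each interior row reduces to an identity between neighbouring terms.
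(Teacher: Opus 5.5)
Your proof is correct, and it differs from the paper mainly in that the paper gives no proof: it quotes the interval formula as background from the Leinster--Meckes theory and uses it only as the one-dimensional input to the product formula. You derive it from the finite definition, in three steps:
\begin{itemize}
\item an explicit weighting on finitely many points of the line;
\item the resulting formula $\Mag(A)=1+\sum_k\tanh(t_k/2)$;
\item passage to the compact case by finite approximation, which is exactly the extension mechanism the paper invokes in its preliminaries.
\end{itemize}
This buys a self-contained argument, with the exact finite-set formula as a by-product.

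Two places can be tightened.
\begin{itemize}
\item \emph{Step 1.} Prove by induction on $i$ that $\sum_{j<i}w_je^{-|x_i-x_j|}=q_i/(1+q_i)$. The base case is $w_0q_1=q_1/(1+q_1)$, and the inductive step is $q_{i+1}\bigl(\tfrac{q_i}{1+q_i}+w_i\bigr)=\tfrac{q_{i+1}}{1+q_{i+1}}$. Symmetrically, $\sum_{j>i}w_je^{-|x_i-x_j|}=q_{i+1}/(1+q_{i+1})$. Every row then reads $\tfrac{q_i}{1+q_i}+w_i+\tfrac{q_{i+1}}{1+q_{i+1}}=1$, and the end rows $w_0+\tfrac{q_1}{1+q_1}=1$ and $w_m+\tfrac{q_m}{1+q_m}=1$ are trivial. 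So the boundary rows you flag as the main obstacle need no separate argument.
\item \emph{The supremum version.} You need neither refinement monotonicity nor enlargement to the endpoints. Your Step 1 computation applies verbatim to any finite $B\subseteq\R$, and $\tanh s\le s$ then gives $\Mag(B)\le 1+\tfrac12(\max B-\min B)\le 1+L/2$. Your Step 3 sequence shows this bound is approached.
\end{itemize}
For the limit-along-a-sequence version, note that you are implicitly using that magnitude is monotone and lower semicontinuous on compact positive-definite spaces. That is what makes the limit independent of the chosen sequence.

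A shorter route uses the weight-measure characterization of magnitude for compact positive-definite spaces. The measure $\tfrac12(\delta_0+\delta_L)+\tfrac12\lambda|_{[0,L]}$ satisfies $\int e^{-|x-y|}\,d\mu(y)=1$ for every $x\in[0,L]$ and has total mass $1+L/2$. That avoids the finite linear algebra but rests on a heavier theorem than your argument needs.
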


Thus even in one dimension, magnitude is not just length: it is a size functional with a constant term and a geometric term.

\begin{proposition}[product formula]
If \(A\subseteq \R^m\) and \(B\subseteq \R^n\) are compact and carry the induced \(\ell_1\) metrics, then
\[
\Mag(A\times B)=\Mag(A)\Mag(B),
\]
where \(A\times B\subseteq \R^{m+n}\) is equipped with the product \(\ell_1\) metric.
\end{proposition}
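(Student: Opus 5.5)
The plan is to prove the product formula first for finite sets and then pass to compact sets by the approximation that defines magnitude in the compact case. The key identity is that in the product \(\ell_1\) metric distances add:
\[
d\bigl((a,b),(a',b')\bigr)=d(a,a')+d(b,b').
\]
Since the exponential turns sums into products, as noted in the preliminaries, the similarity kernel factors:
\[
e^{-d((a,b),(a',b'))}=e^{-d(a,a')}e^{-d(b,b')}.
\]

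For finite \(A=\{a_1,\dots,a_m\}\subseteq\R^m\) and \(B=\{b_1,\dots,b_n\}\subseteq\R^n\), I will index the points of \(A\times B\) by pairs \((i,k)\). The factorization then says that the similarity matrix is a Kronecker product, \(Z_{A\times B}=Z_A\otimes Z_B\). Subsets of \(\ell_1\) spaces are positive definite, so \(Z_A\) and \(Z_B\) are invertible.

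Next, let \(w\) solve \(Z_A w=\1\) and \(v\) solve \(Z_B v=\1\). The mixed-product rule gives
\[
(Z_A\otimes Z_B)(w\otimes v)=(Z_Aw)\otimes(Z_Bv)=\1\otimes\1=\1.
\]
So \(w\otimes v\) is a weighting of \(A\times B\). By the second formulation of the definition,
\[
\Mag(A\times B)=\sum_{i,k}w_iv_k=\Bigl(\sum_i w_i\Bigr)\Bigl(\sum_k v_k\Bigr)=\Mag(A)\Mag(B).
\]
Equivalently, one can use \((Z_A\otimes Z_B)^{-1}=Z_A^{-1}\otimes Z_B^{-1}\) and sum all entries.

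For compact \(A\) and \(B\), I will use the positive-definite characterization: magnitude is the supremum of \(\Mag(F)\) over finite \(F\subseteq A\), and it is also the limit along any sequence of finite subsets converging in the Hausdorff metric. This is where I expect the main obstacle. The difficulty is the lower semicontinuity issue: a product of finite subsets is a special finite subset of \(A\times B\), and one must make sure this restricted family still computes \(\Mag(A\times B)\).

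I plan to handle it as follows.
\begin{enumerate}[leftmargin=2em]
\item Choose increasing finite subsets \(A_j\to A\) and \(B_j\to B\) in the Hausdorff metric. Then \(A_j\times B_j\to A\times B\) in the Hausdorff metric, because Hausdorff distance in the \(\ell_1\) product is at most the sum of the factor Hausdorff distances.
\item By Meckes' continuity of magnitude on compact subsets of positive-definite spaces, approached through finite subsets, \(\Mag(A_j)\to\Mag(A)\), \(\Mag(B_j)\to\Mag(B)\), and \(\Mag(A_j\times B_j)\to\Mag(A\times B)\).
\item By the finite case, \(\Mag(A_j\times B_j)=\Mag(A_j)\Mag(B_j)\) for every \(j\), so passing to the limit yields the claim.
\end{enumerate}

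If only the supremum characterization is available, I would argue the two inequalities separately. The product of suprema gives "\(\ge\)". For "\(\le\)", I would enlarge an arbitrary finite \(F\subseteq A\times B\) to \(\pi_A(F)\times\pi_B(F)\), using that magnitude is monotone under inclusion in positive-definite spaces.
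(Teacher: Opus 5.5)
Your proposal is correct and follows the paper's route: the factorization $Z_{A\times B}=Z_A\otimes Z_B$ for finite sets (your weighting $w\otimes v$ is equivalent to the paper's observation that $(Z_A\otimes Z_B)^{-1}=Z_A^{-1}\otimes Z_B^{-1}$ has entry sum equal to the product of entry sums), followed by passage to compact sets via finite approximation. The paper only says the compact case ``follows by approximation''; either of your arguments fills this in correctly, namely Hausdorff convergence of $A_j\times B_j$ together with Meckes' continuity, or the supremum characterization together with monotonicity and $F\subseteq\pi_A(F)\times\pi_B(F)$. One small fix: avoid reusing $m,n$ both as the cardinalities of $A,B$ and as the ambient dimensions.
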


\begin{proof}
For finite spaces the similarity matrix of \(A\times B\) is the Kronecker product of the similarity matrices of \(A\) and \(B\). Summing the entries of the inverse therefore multiplies. The compact case follows by approximation.
\end{proof}

\myparagraph{Rectangles, boxes, and zero-anchored unions}

By combining the interval formula with multiplicativity, one immediately obtains explicit formulas for rectangles and boxes. These are exactly the shapes that arise in dominated regions.

\begin{proposition}[single rectangle and single box]
For a rectangle \(R=[0,a]\times[0,b]\subset \ell_1^2\) and a box \(Q=[0,a]\times[0,b]\times[0,c]\subset \ell_1^3\),
\[
\Mag(R)=\Bigl(1+\frac a2\Bigr)\Bigl(1+\frac b2\Bigr)=1+\frac{a+b}{2}+\frac{ab}{4},
\]
\[
\Mag(Q)=\Bigl(1+\frac a2\Bigr)\Bigl(1+\frac b2\Bigr)\Bigl(1+\frac c2\Bigr)
=1+\frac{a+b+c}{2}+\frac{ab+ac+bc}{4}+\frac{abc}{8}.
\]
\end{proposition}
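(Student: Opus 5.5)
The plan is to obtain both formulas directly from the two preceding propositions, the interval formula and the product formula. First I identify each shape as a Cartesian product of intervals carrying the product \(\ell_1\) metric. The \(\ell_1\) distance on \(\R^2\) is
\[
|x_1-y_1|+|x_2-y_2|.
\]
This is exactly the product \(\ell_1\) metric on \(\R\times\R\), and both factors carry the standard metric. Hence \(R=[0,a]\times[0,b]\), with its induced \(\ell_1\) metric, is the metric product of the intervals \([0,a]\) and \([0,b]\). Similarly \(Q=\bigl([0,a]\times[0,b]\bigr)\times[0,c]\), since the \(\ell_1\) metric on \(\R^3\) splits as the \(\ell_1\) metric on \(\R^2\) plus the absolute difference in the last coordinate. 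All factors are compact, so the product formula applies.

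Applying the product formula once to \(R\) and invoking the interval formula for each factor gives
\[
\Mag(R)=\Mag([0,a])\Mag([0,b])=\Bigl(1+\frac a2\Bigr)\Bigl(1+\frac b2\Bigr).
\]
For \(Q\), I apply the product formula with \(A=[0,a]\times[0,b]\subseteq\R^2\) and \(B=[0,c]\subseteq\R\). Substituting the already established value of \(\Mag(R)\) for \(A\) then gives the threefold product. Expanding the products produces the stated elementary symmetric polynomial forms
\[
1+\frac{a+b}{2}+\frac{ab}{4}
\quad\text{and}\quad
1+\frac{a+b+c}{2}+\frac{ab+ac+bc}{4}+\frac{abc}{8}.
\]
This step is routine algebra.

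There is essentially no obstacle, given the earlier propositions. The only point needing care is the identification of the induced \(\ell_1\) metric on a box with the product \(\ell_1\) metric of its factors, which holds by the additivity of the \(\ell_1\) distance over coordinates. Implicitly we also use that the intervals are nondegenerate, \(a,b,c\ge 0\); degenerate lengths \(0\) are consistent, since \(\Mag(\{0\})=1\).
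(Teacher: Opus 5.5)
Your proposal is correct and matches the paper's proof. Like the paper, you use the interval formula on each factor together with multiplicativity, iterated once for $Q$, and then expand; your explicit check that the induced $\ell_1$ metric on a box coincides with the product $\ell_1$ metric of its factors is a detail the paper leaves implicit.
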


\begin{proof}
Apply the interval formula coordinatewise and use multiplicativity.
\end{proof}

These formulas already suggest the main point: in \(\ell_1\), magnitude decomposes into a top-dimensional volume term together with lower-dimensional terms. For a rectangle, the area term \(ab/4\) is accompanied by the boundary-shadow term \((a+b)/2\); for a box, the volume term \(abc/8\) is accompanied by face and edge contributions. This is exactly the mechanism that later distinguishes magnitude from hypervolume.

The class most relevant to hypervolume is the family of downward-closed, zero-anchored planar unions
\[
U=\bigcup_{k=1}^m [0,x_k]\times[0,y_k].
\]
For the \(\ell_1\)-convex pixelated sets treated in Leinster and Meckes' \(\ell_1\) theory \cite{LeinsterMeckes2017}, one has the exact low-dimensional valuation formula
\[
\Mag(U)=V_0'(U)+\frac{V_1'(U)}{2}+\frac{V_2'(U)}{4}.
\]
For zero-anchored unions the terms become especially concrete: the zeroth-order term is \(1\), the first-order term is the sum of the coordinate projection lengths, and the second-order term is the ordinary area.

\begin{theorem}[zero-anchored planar union formula]\label{thm:planar-dominated-magnitude}
Let
\[
U=\bigcup_{k=1}^m [0,x_k]\times[0,y_k]\subset \ell_1^2,
\qquad x_k,y_k\ge 0,
\]
and assume \(U\) belongs to the exact class above. Then
\[
\Mag(U)=1+\frac{X+Y}{2}+\frac{\HV(U)}{4},
\]
where
\[
X=\max_k x_k,
\qquad
Y=\max_k y_k,
\qquad
\HV(U)=\vol_2(U).
\]
\end{theorem}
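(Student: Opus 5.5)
The plan is to take the Leinster--Meckes valuation formula quoted just before the statement,
\[
\Mag(U)=V_0'(U)+\frac{V_1'(U)}{2}+\frac{V_2'(U)}{4},
\]
as the black box: the hypothesis that \(U\) lies in the exact class is precisely what licenses it. The work then reduces to identifying the three \(\ell_1\)-intrinsic volumes \(V_0',V_1',V_2'\) for a zero-anchored union. The main tools are two facts: each \(V_i'\) is a valuation, so it satisfies inclusion--exclusion on finite unions, and on a single box \([0,a]\times[0,b]\) the rectangle formula from the single-box proposition already gives \(V_0'=1\), \(V_1'=a+b\), \(V_2'=ab\).

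First I will reduce to a staircase. I discard dominated rectangles, which leaves \(U\) unchanged. I then order the survivors as \(x_1<x_2<\dots<x_m\) with \(y_1>y_2>\dots>y_m\). Any two rectangles then intersect in \([0,x_i]\times[0,y_j]\) with \(i<j\), again a zero-anchored rectangle. More generally, every intersection of a subfamily is \([0,\min x]\times[0,\min y]\). Because of this, inclusion--exclusion for a valuation only ever evaluates \(V_i'\) on zero-anchored rectangles, where the values are known.

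Next I compute the terms one by one.

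For \(V_2'\), Lebesgue measure is a valuation, so \(V_2'(U)=\vol_2(U)=\HV(U)\) directly.

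For \(V_0'\), the terms in the inclusion--exclusion sum are all \(1\), and the alternating sum \(\sum_{\emptyset\ne S}(-1)^{|S|+1}\) equals \(1\).

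For \(V_1'\), I use the rectangle decomposition \(V_1'([0,a]\times[0,b])=a+b\), which splits into an \(x\)-part and a \(y\)-part. The \(x\)-part of the intersection over \(S\) is \(\min_{k\in S}x_k\). Inclusion--exclusion applied to \(\min\) gives \(\max\), by the standard identity \(\max_k x_k=\sum_{S}(-1)^{|S|+1}\min_{k\in S}x_k\). Hence \(V_1'(U)=X+Y\). Alternatively, I can note that \(V_1'\) of an \(\ell_1\)-convex planar set is the sum of the lengths of its two coordinate projections, and for a zero-anchored union those projections are \([0,X]\) and \([0,Y]\).

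Substituting these three values into the valuation formula gives \(1+\tfrac{X+Y}{2}+\tfrac{\HV(U)}{4}\). As a sanity check, the case \(m=1\) recovers the rectangle formula.

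The main obstacle is justifying that \(V_0',V_1'\) really are valuations with the stated values, that is, that they extend additively from boxes to the exact class. I will rely on the Leinster--Meckes theory for this. I will also check directly that the staircase decomposition into vertical strips \([x_{k-1},x_k]\times[0,y_k]\) is compatible with their definitions, which serves as a fallback verification.
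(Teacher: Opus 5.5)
Your argument is correct, but your main route differs from the paper's; the paper's proof is essentially the alternative you mention only in passing. The paper does not decompose \(U\) at all. It reads off the three terms directly: \(V_0'(U)=1\) because \(U\) is contractible and the zeroth term is an Euler characteristic; \(V_1'(U)=X+Y\) because the two coordinate projections of \(U\) are \([0,X]\) and \([0,Y]\); and \(V_2'(U)=\vol_2(U)\).

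Your route instead evaluates each \(V_i'\) on zero-anchored rectangles and propagates the values to \(U\) by inclusion--exclusion. It uses the fact that intersections of anchored rectangles are again anchored rectangles, together with the max--min identity for the \(V_1'\) term. This costs you two inputs that the paper's direct reading avoids.
\begin{enumerate}
\item You need each \(V_i'\) to be a valuation on unions of anchored rectangles. You correctly flag this.
\item You need the individual values \(V_0'=1\), \(V_1'=a+b\), \(V_2'=ab\) on a rectangle. The product formula alone only gives the weighted sum \(1+\frac{a+b}{2}+\frac{ab}{4}\). To split it into the three terms you must invoke the degree-\(i\) homogeneity of \(V_i'\), comparing coefficients of \(t^i\) in \(\Mag(tR)\), or else use the definition of \(V_i'\) as a sum of coordinate-projection measures.
\end{enumerate}

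In exchange, your argument is exactly the valuation mechanism the paper itself uses later for the all-dimensional projection formula. There the needed valuation property of the projection-measure functionals comes from the lemma that coordinate projection commutes with intersection on downward-closed sets. So your proof anticipates the general case. The paper's planar proof is shorter but leans entirely on knowing the geometric meaning of \(V_0'\) and \(V_1'\).
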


\begin{proof}
The set \(U\) is contractible, so the Euler-characteristic term is \(1\). Its coordinate projections are \([0,X]\) and \([0,Y]\), so the first intrinsic-volume term is \(X+Y\). The top term is ordinary area, that is, the hypervolume of the dominated region.
\end{proof}

This formula is one of the key reasons why magnitude is so attractive in the present setting: in two dimensions it is literally hypervolume plus a positive correction coming from the two one-dimensional shadows of the dominated set.

\begin{remark}
Example~\ref{ex:intro-three-point-mag} in the introduction gives a concrete exact two-dimensional magnitude calculation of this type. The same pattern persists for a single \(n\)-box \(B=\prod_{i=1}^n [0,L_i]\):
\[
\Mag(B)=\prod_{i=1}^n\Bigl(1+\frac{L_i}{2}\Bigr)=\sum_{k=0}^n \frac{e_k(L_1,\dots,L_n)}{2^k},
\]
where \(e_k\) is the \(k\)th elementary symmetric polynomial. Thus magnitude of a box is built from the full-dimensional term together with all lower-dimensional coordinate-face contributions. In the present paper, we keep the higher-dimensional discussion at the level of this product formula and the low-dimensional shadow interpretation.
\end{remark}

\section{Monotonicity, Pareto compliance, and relation to hypervolume}

We now turn from explicit low-dimensional formulas to the structural consequences for indicator theory. The key observation is that dominated regions generated by approximation sets in multiobjective maximization are \emph{downward closed} and, after translation of a common anchor point to the origin, are finite unions of anchored boxes. This makes it possible to formulate magnitude in a way that is both all-dimensional and directly relevant to Pareto compliance.

\begin{definition}
For a finite approximation set $A\subseteq [0,\infty)^m$ in a maximization problem with anchor point $0$, its dominated region is
\[
D(A):=\bigcup_{a\in A}[0,a],
\qquad
[0,a]:=\prod_{i=1}^m [0,a_i].
\]
A set $D\subseteq [0,\infty)^m$ is called \emph{anchored downward closed} if $x\in D$ and $0\le y\le x$ imply $y\in D$.
\end{definition}

Every dominated region $D(A)$ is anchored downward closed and is a finite union of anchored boxes.

\begin{lemma}[projection of intersections for downward-closed sets]
Let $D,E\subseteq [0,\infty)^m$ be downward closed and let $S\subseteq [m]:=\{1,\dots,m\}$. Then
\[
\proj_S(D\cap E)=\proj_S(D)\cap \proj_S(E),
\]
where $\proj_S$ denotes coordinate projection onto the coordinates in $S$.
\end{lemma}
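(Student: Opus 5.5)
We prove the two inclusions separately. The inclusion $\proj_S(D\cap E)\subseteq \proj_S(D)\cap\proj_S(E)$ holds for arbitrary sets and needs no hypothesis. If $z=\proj_S(x)$ with $x\in D\cap E$, then $x\in D$ gives $z\in\proj_S(D)$, and $x\in E$ gives $z\in\proj_S(E)$.

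The substantive direction is the reverse inclusion, and here downward closedness is essential. Let $z\in\proj_S(D)\cap\proj_S(E)$. Then there are $x\in D$ and $y\in E$ with $\proj_S(x)=\proj_S(y)=z$. The difficulty is that $x$ and $y$ may disagree on the coordinates outside $S$, so neither need lie in the other set. We resolve this by \emph{pushing the complementary coordinates down to the anchor}. Define $w\in[0,\infty)^m$ by
\[
w_i=z_i \quad (i\in S),\qquad w_i=0\quad (i\notin S).
\]
Since $x,y\in[0,\infty)^m$, we have $0\le w\le x$ coordinatewise: on $S$ the two vectors agree, and off $S$ we have $w_i=0\le x_i$. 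Downward closedness of $D$ therefore gives $w\in D$. In the same way $0\le w\le y$, so $w\in E$. Hence $w\in D\cap E$ and $\proj_S(w)=z$, so $z\in\proj_S(D\cap E)$.

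The only point needing care is that downward closedness is taken relative to the anchor $0$ inside $[0,\infty)^m$, as in the definition of anchored downward closed sets. This is precisely what makes $w$ admissible, since it requires $w\ge 0$, and that holds because $z$ is a projection of a nonnegative point. The degenerate cases also cause no trouble. If $S=\emptyset$, the projection is onto a point, and both sides equal that point whenever $D$ and $E$ are nonempty; they contain the common point $0$. If $S=[m]$, the statement is trivial. We note that the argument uses no finiteness or box structure, so the lemma holds for all anchored downward closed sets. This includes the dominated regions $D(A)$, and it is the form needed later for inclusion--exclusion over projections.
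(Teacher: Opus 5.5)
Your proof is correct and follows the same route as the paper: fix the $S$-coordinates, replace the complementary coordinates by a common lower bound, and let downward closedness put the resulting point in $D\cap E$. The only difference is the choice of lower bound. The paper takes the coordinatewise minimum $\min(y,z)$ of the two complementary tails, which would also work without an anchor. You push the tails to $0$, which uses the anchor and is the same move the paper later makes in its strict-monotonicity proof.
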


\begin{proof}
The inclusion $\proj_S(D\cap E)\subseteq \proj_S(D)\cap \proj_S(E)$ is immediate. For the converse, let $x_S\in \proj_S(D)\cap \proj_S(E)$. Then there exist complementary coordinate vectors $y$ and $z$ such that $(x_S,y)\in D$ and $(x_S,z)\in E$. Let $w:=\min(y,z)$ coordinatewise. Since $D$ and $E$ are downward closed, $(x_S,w)\in D\cap E$. Hence $x_S\in \proj_S(D\cap E)$.
\end{proof}

\begin{theorem}[all-dimensional magnitude formula for dominated sets]
Let $D\subseteq [0,\infty)^m$ be a finite union of anchored boxes. Then
\[
\Mag(D)=\sum_{S\subseteq [m]} 2^{-|S|}\,\lambda_{|S|}(\proj_S D),
\]
where $\lambda_k$ denotes $k$-dimensional Lebesgue measure, and by convention $\lambda_0(\proj_\emptyset D)=1$ for nonempty $D$.
\end{theorem}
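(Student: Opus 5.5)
The plan is an inclusion--exclusion argument that runs in parallel on both sides of the formula. Write $D=\bigcup_{k=1}^n B_k$ with $B_k=[0,a^{(k)}]$ anchored boxes, and define the right-hand side as a functional
\[
\Phi(D):=\sum_{S\subseteq[m]}2^{-|S|}\lambda_{|S|}(\proj_S D).
\]
For a single box $B=\prod_i[0,L_i]$ we have $\proj_S B=\prod_{i\in S}[0,L_i]$. Hence $\Phi(B)=\sum_S 2^{-|S|}\prod_{i\in S}L_i=\prod_i(1+L_i/2)$, which equals $\Mag(B)$ by the box/product formula of the remark above. Any intersection of anchored boxes is again an anchored box, namely $[0,\min(a^{(k)},a^{(l)})]$ coordinatewise. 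So it suffices to show that both $\Mag$ and $\Phi$ satisfy inclusion--exclusion over the finite family $B_1,\dots,B_n$:
\[
F(D)=\sum_{\emptyset\ne K\subseteq[n]}(-1)^{|K|+1}F\Bigl(\bigcap_{k\in K}B_k\Bigr).
\]
Once both sides agree on each intersection box, they agree on $D$.

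For $\Phi$ the identity is elementary. Fix $S$. Then $\proj_S D=\bigcup_k \proj_S B_k$. By the lemma on projections of intersections of downward-closed sets, applied inductively (intersections of downward-closed sets remain downward closed), we get $\proj_S\bigl(\bigcap_{k\in K}B_k\bigr)=\bigcap_{k\in K}\proj_S B_k$. Inclusion--exclusion for the Lebesgue measure $\lambda_{|S|}$ on the finite union $\bigcup_k\proj_S B_k$ then yields the identity for each $S$. For $S=\emptyset$ it reduces to $\sum_{K\ne\emptyset}(-1)^{|K|+1}=1$. Summing over $S$ with weights $2^{-|S|}$ gives inclusion--exclusion for $\Phi$.

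The main obstacle is the corresponding valuation property of $\Mag$ on this class. Magnitude is not a valuation on arbitrary compact sets, so this step must rely on the $\ell_1$ theory of Leinster and Meckes~\cite{LeinsterMeckes2017}, already invoked for Theorem~\ref{thm:planar-dominated-magnitude}. Their result is that on $\ell_1$-convex pixelated (box-union) sets magnitude equals $\sum_k 2^{-k}V_k'$, where the $V_k'$ are the $\ell_1$ intrinsic volumes. Those $V_k'$ are valuations, given by sums over coordinate subspaces of measures of projections. I would first verify that a finite union of anchored boxes lies in their exact class. Such a set is a downward-closed staircase and is $\ell_1$-convex, since any two points are joined by a monotone lattice path through the componentwise minimum, which stays in $D$. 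Their formula then applies to $D$ and to every intersection box. Finally I would check that $V_k'(D)=\sum_{|S|=k}\lambda_k(\proj_S D)$ for downward-closed sets. If the cited identification of $V_k'$ is only available in a ``sum of projections'' form, this last check is immediate. Otherwise I would fall back on the valuation property: both $V_k'$ and $\sum_{|S|=k}\lambda_k\circ\proj_S$ are valuations on the lattice generated by anchored boxes, the latter by the previous paragraph. They agree on single boxes, and by inclusion--exclusion they therefore agree on $D$.

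Combining these steps, $\Mag(D)=\Phi(D)$. The result is consistent with Theorem~\ref{thm:planar-dominated-magnitude} for $m=2$.
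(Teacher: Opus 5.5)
Your proposal is correct and follows the paper's route: the projection functional is a valuation by the intersection lemma, it agrees with $\Mag$ on anchored boxes by the product formula, and this agreement extends to finite unions by inclusion--exclusion. The one difference is that you justify the valuation property of $\Mag$ on this class explicitly, via the Leinster--Meckes $\ell_1$ formula and the $\ell_1$-convexity of downward-closed box unions, whereas the paper's proof simply asserts it; note that once you apply that formula to $D$ itself, the inclusion--exclusion step on the $\Mag$ side becomes redundant.
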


\begin{proof}
Define
\[
F(D):=\sum_{S\subseteq [m]} 2^{-|S|}\,\lambda_{|S|}(\proj_S D).
\]
By the lemma, for each fixed $S$ the map $D\mapsto \lambda_{|S|}(\proj_S D)$ is a valuation on downward-closed sets, because projection sends unions to unions and, in the downward-closed class, sends intersections to intersections. Hence $F$ is itself a valuation on finite unions of anchored boxes.

Now consider one anchored box $Q(a)=[0,a_1]\times\cdots\times[0,a_m]$. Its projection onto $S$ is again an anchored box of measure $\prod_{i\in S} a_i$, so
\[
F(Q(a))=\sum_{S\subseteq [m]} 2^{-|S|}\prod_{i\in S} a_i
=\prod_{i=1}^m \left(1+\frac{a_i}{2}\right).
\]
By the product formula for magnitude in $\ell_1$, the right-hand side is exactly $\Mag(Q(a))$. Thus $F$ and $\Mag$ agree on anchored boxes, and since both are valuations on the class generated by anchored boxes, they agree on every finite union of anchored boxes.
\end{proof}

The formula specializes in dimension two to
\[
\Mag(D)=1+\frac{\lambda_1(\proj_{\{1\}}D)+\lambda_1(\proj_{\{2\}}D)}{2}+\frac{\lambda_2(D)}{4},
\]
and in dimension three to
\begin{align*}
\Mag(D)&=1+\frac{\lambda_1(\proj_{\{1\}}D)+\lambda_1(\proj_{\{2\}}D)+\lambda_1(\proj_{\{3\}}D)}{2}\\
&\quad+\frac{\lambda_2(\proj_{\{1,2\}}D)+\lambda_2(\proj_{\{1,3\}}D)+\lambda_2(\proj_{\{2,3\}}D)}{4}\\
&\quad+\frac{\lambda_3(D)}{8}.
\end{align*}
Thus hypervolume appears as the top-dimensional term, but magnitude also contains strictly positive lower-dimensional projection terms.

\begin{theorem}[Weak and strict set monotonicity of magnitude on dominated sets]
Let $D,E\subseteq [0,\infty)^m$ be finite unions of anchored boxes. If $D\subseteq E$, then
\[
\Mag(D)\le \Mag(E).
\]
If $D\subsetneq E$, then
\[
\Mag(D)<\Mag(E).
\]
Thus magnitude is weakly and strictly set-monotone on the class of anchored dominated sets.
\end{theorem}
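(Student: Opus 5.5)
The plan is to read both claims off the all-dimensional magnitude formula for dominated sets,
\[
\Mag(D)=\sum_{S\subseteq[m]}2^{-|S|}\lambda_{|S|}(\proj_S D),
\]
which applies to $D$ and $E$ because both are finite unions of anchored boxes. Each term is then compared separately.

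\emph{Weak monotonicity.} Assume $D\subseteq E$. Then $\proj_S D\subseteq \proj_S E$ for every $S\subseteq[m]$. By monotonicity of Lebesgue measure, $\lambda_{|S|}(\proj_S D)\le\lambda_{|S|}(\proj_S E)$. The term for $S=\emptyset$ equals $1$ on both sides whenever $D\neq\emptyset$. If $D=\emptyset$ we adopt $\Mag(\emptyset)=0$, consistent with the convention in the theorem. Summing these inequalities with the nonnegative weights $2^{-|S|}$ gives $\Mag(D)\le\Mag(E)$.

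\emph{Strict monotonicity.} Given weak monotonicity, it suffices to find one $S$ with $\lambda_{|S|}(\proj_S D)<\lambda_{|S|}(\proj_S E)$. If $D=\emptyset\ne E$, the $S=\emptyset$ term already jumps from $0$ to $1$, so assume $D\neq\emptyset$. Pick $x\in E\setminus D$ and let $S=\{i: x_i>0\}$ be its support. If $S=\emptyset$ then $x=0$, which lies in $D$ because $D$ is nonempty and downward closed; this is impossible, so $S\ne\emptyset$. We then show that $\proj_S E\setminus\proj_S D$ has positive $|S|$-dimensional measure, treating two points in turn.

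First, $x_S\notin\proj_S D$. Suppose instead $(x_S,y)\in D$ for some $y\ge 0$ in the complementary coordinates. Downward closure lets us lower $y$ to $0$, which gives $x\in D$, a contradiction.

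Second, $\proj_S D$ is a finite union of anchored boxes in $\R^S$, hence closed. So some neighbourhood of $x_S$ misses $\proj_S D$. Since $x\in E$ and $E$ is downward closed, $[0,x_S]\subseteq\proj_S E$. All coordinates of $x_S$ are positive, so the box $[0,x_S]$ is full-dimensional in $\R^S$. The part of it near the corner $x_S$, namely $[x_S-\varepsilon\1,x_S]$ for small $\varepsilon>0$, lies in $\proj_S E$ and avoids $\proj_S D$. This part has positive measure $\varepsilon^{|S|}$. Hence $\lambda_{|S|}(\proj_S E)-\lambda_{|S|}(\proj_S D)\ge\varepsilon^{|S|}>0$, and together with the weak inequalities for all other $S$ this gives $\Mag(D)<\Mag(E)$.

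The main obstacle is this last step. A set difference $E\setminus D$ of zero top-dimensional volume, for example a boundary face lying in a coordinate hyperplane, must still produce a positive gap in \emph{some} lower-dimensional projection. Restricting to the support $S$ of a witness point handles this. It relies on closedness of $\proj_S D$, on downward closure, and on the fact that projecting onto the support makes the witness an interior-type corner.
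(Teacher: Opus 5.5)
Your proof is correct and follows the paper's argument essentially step for step. Weak monotonicity comes from comparing the projection formula term by term. For strictness, both take the support $T=\{i:x_i>0\}$ of a witness $x\in E\setminus D$, use downward closure to show $x_T\notin\proj_T D$, and use closedness of $\proj_T D$ to find a small box below $x_T$ lying in $\proj_T E\setminus\proj_T D$. You also explicitly handle the degenerate cases $D=\emptyset$ and empty support, which the paper leaves implicit.
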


\begin{proof}
If $D\subseteq E$, then for every $S\subseteq [m]$ one has $\proj_S D\subseteq \proj_S E$, hence
\[
\lambda_{|S|}(\proj_S D)\le \lambda_{|S|}(\proj_S E).
\]
Since all coefficients $2^{-|S|}$ in the projection formula for magnitude are positive, it follows immediately that
\[
\Mag(D)\le \Mag(E).
\]
This proves weak set monotonicity.

If the inclusion is strict, choose $x\in E\setminus D$ and let $T:=\{i:x_i>0\}$. Then $x_T\in \proj_T(E)$. We claim that $x_T\notin \proj_T(D)$. Indeed, if $x_T\in \proj_T(D)$, then there exists a complementary coordinate vector $y$ with $(x_T,y)\in D$. Since $D$ is downward closed and $x=(x_T,0)\le (x_T,y)$, this would imply $x\in D$, a contradiction. Therefore $\proj_T(D)\subsetneq \proj_T(E)$.

Because $D$ and $E$ are finite unions of anchored boxes, their coordinate projections are finite unions of anchored boxes as well, hence measurable and closed. Since $x_T\in \proj_T(E)\setminus \proj_T(D)$, there exists a small anchored neighborhood below $x_T$ that lies in $\proj_T(E)$ but not in $\proj_T(D)$. Consequently,
\[
\lambda_{|T|}(\proj_T D)<\lambda_{|T|}(\proj_T E).
\]
At least one term in the magnitude formula is therefore strictly larger for $E$, so $\Mag(D)<\Mag(E)$. This proves strict set monotonicity.
\end{proof}

\begin{remark}[Boundary monotonicity versus hypervolume]
The preceding theorem already shows a structural difference between magnitude and hypervolume. Hypervolume depends only on the top-dimensional measure $\lambda_m(D)$ and therefore does not change when a dominated set is enlarged only through lower-dimensional boundary parts with zero $m$-dimensional measure. Magnitude, by contrast, is monotone with respect to all coordinate-shadow enlargements because every projection term enters with a positive coefficient. In particular, adding boundary points with one or more anchor-point coordinates may leave hypervolume unchanged while still increasing magnitude.
\end{remark}

\myparagraph{Pareto compliance from set monotonicity}
We recall the standard indicator notion; see, for example, Zitzler, Brockhoff, and Thiele~\cite{ZitzlerBrockhoffThiele2007}.

\begin{definition}
Let $I$ be a set indicator for finite approximation sets in a maximization problem with fixed anchor point $r$. For finite approximation sets $A$ and $B$, write
\[
A \preceq B
\]
if every point of $A$ is weakly dominated by some point of $B$.
We say that $I$ is \emph{weakly Pareto compliant} if
\[
A \preceq B \quad\Longrightarrow\quad I(A)\le I(B).
\]
We say that $I$ is \emph{strictly Pareto compliant} if
\[
A \preceq B \text{ and } B \not\preceq A \quad\Longrightarrow\quad I(A)<I(B).
\]
Thus, with our convention, $A \preceq B$ means that $B$ is at least as good as $A$.
\end{definition}

For hypervolume, the implication from dominated-region inclusion to indicator monotonicity is standard and underlies its Pareto-compliance interpretation; see also Emmerich, Deutz, and Yevseyeva~\cite{EmmerichDeutzYevseyeva2014} for a related discussion of dominated-region based hypervolume constructions. Once weak and strict set monotonicity have been established, the corresponding Pareto-compliance statements are immediate.

\begin{corollary}[Weak and strict Pareto compliance of magnitude on dominated sets]
Fix an anchor point $r\in\R^m$ and let $A$ and $B$ be finite approximation sets in an $m$-objective maximization problem. Suppose that $r$ lies below the objective vectors under consideration, so that it serves as a common nadir-type anchor point, and suppose that the translated dominated regions $\widetilde D_r(A)$ and $\widetilde D_r(B)$ are finite unions of anchored boxes in $[0,\infty)^m$.

If $A\preceq B$, then
\[
\Mag(\widetilde D_r(A))\le \Mag(\widetilde D_r(B)).
\]
If, in addition, $B\not\preceq A$, then
\[
\Mag(\widetilde D_r(A))< \Mag(\widetilde D_r(B)).
\]
Hence magnitude is weakly Pareto compliant on this class. Moreover, it is strictly Pareto compliant on this class.
\end{corollary}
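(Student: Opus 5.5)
The plan is to reduce the corollary to the weak and strict set-monotonicity theorem. The single bridge needed is that Pareto dominance between approximation sets becomes inclusion between their translated dominated regions. I would first make the translation explicit: write $\widetilde D_r(A):=\bigcup_{a\in A}[0,a-r]$, which is $D(A-r)$ in the notation of the preceding definition. Translation by $-r$ preserves coordinatewise order, so weak dominance $a\le b$ holds if and only if $a-r\le b-r$, and the boxes $[0,a-r]$ are anchored because $r$ lies below all objective vectors. It therefore suffices to treat $r=0$ and write $D(A)$.

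\emph{Weak part.} Assume $A\preceq B$. For each $a\in A$ pick $b\in B$ with $a\le b$; then $[0,a]\subseteq[0,b]\subseteq D(B)$. Taking the union over $a\in A$ gives $D(A)\subseteq D(B)$. Weak set monotonicity then yields $\Mag(D(A))\le\Mag(D(B))$.

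\emph{Strict part.} Assume in addition $B\not\preceq A$. Then some $b\in B$ is not weakly dominated by any $a\in A$. I claim $b\notin D(A)$. Indeed, $b\in D(A)$ would mean $b\in[0,a]$ for some $a\in A$, that is $b\le a$, which says $a$ weakly dominates $b$, a contradiction. Since $b\in[0,b]\subseteq D(B)$, we get $D(A)\subsetneq D(B)$. Strict set monotonicity then gives $\Mag(D(A))<\Mag(D(B))$.

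The main obstacle is only bookkeeping. We must check that the hypotheses of the set-monotonicity theorem hold, namely that both regions are finite unions of anchored boxes in $[0,\infty)^m$; the corollary assumes this explicitly. We must also check that the point $b$ witnessing $B\not\preceq A$ is a genuine point of $D(B)\setminus D(A)$, with no measure-zero subtlety. This is exactly where the strict theorem does the real work: its projection argument turns a strict inclusion, even one witnessed only by a boundary point with some coordinates equal to the anchor, into a strict increase of some projection term $\lambda_{|T|}(\proj_T\cdot)$. No further analysis is needed at the level of the corollary.
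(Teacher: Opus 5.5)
Your proposal is correct and follows the paper's proof: $A\preceq B$ gives $\widetilde D_r(A)\subseteq\widetilde D_r(B)$, and a point $b\in B$ not weakly dominated by any point of $A$ gives $b-r\in\widetilde D_r(B)\setminus\widetilde D_r(A)$. Both parts then follow from the weak and strict set-monotonicity theorem. Your explicit reduction to $r=0$, and your check that $b\in[0,a]$ amounts to $b\le a$, only spell out steps the paper compresses into one line each.
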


\begin{proof}
If $A\preceq B$, then every point dominated by $A$ is also dominated by $B$, so $\widetilde D_r(A)\subseteq \widetilde D_r(B)$. The weak inequality therefore follows from weak set monotonicity.
If, in addition, $B\not\preceq A$, then there exists a point $b\in B$ that is not weakly dominated by any point of $A$. After translation by the common anchor point, the vector $b-r$ belongs to $\widetilde D_r(B)$ but not to $\widetilde D_r(A)$. Hence $\widetilde D_r(A)\subsetneq \widetilde D_r(B)$, and the strict inequality follows from strict set monotonicity.
\end{proof}

\begin{remark}[Boundary-touching points: a key difference from hypervolume]
The corollary also makes clear why magnitude and hypervolume respond differently to points on coordinate faces through the anchor point. Hypervolume is purely top-dimensional:
\[
\HV(D)=\lambda_m(D).
\]
Therefore a point such as $(a,b,0)$ in three objectives contributes no positive three-dimensional volume increment by itself. Magnitude, however, contains all lower-dimensional projection terms as well. The same point contributes positively through the projections onto coordinate axes and coordinate planes. Thus points that share one or more coordinates with the anchor point have positive contributions to magnitude, whereas their top-dimensional hypervolume contribution vanishes. This is one of the fundamental structural differences between the two indicators and explains why magnitude naturally rewards boundary-including approximation sets.
\end{remark}

In multiobjective maximization with anchor point $r\in\R^m$, the dominated region of a finite set $P=\{p^{(1)},\dots,p^{(\mu)}\}$ is
\[
D_r(P)=\bigcup_{i=1}^{\mu} \prod_{j=1}^m [r_j,p^{(i)}_j].
\]
After translation by $r$, the same region becomes zero-anchored. Hence magnitude with respect to a common anchor point is simply the magnitude of the translated dominated region. The canonical choice $r=0$ is therefore merely a normalization.

\section{A projected set-gradient ascent method}

Having established the indicator-theoretic properties of magnitude, we next explain how to optimize it over finite populations. The goal of this section is to formulate a set-gradient method in a way that makes the comparison between hypervolume and magnitude completely parallel.

The projected set-gradient idea is standard in indicator-based search; for hypervolume-gradient and Newton-type developments see, for example, Sosa Hernández et al.~\cite{SosaHernandezSchuetzeEmmerich2014}, Emmerich and Deutz~\cite{EmmerichDeutz2014}, Hernández et al.~\cite{HernandezSchuetzeWangDeutzEmmerich2018}, and Wang et al.~\cite{WangDeutzBackEmmerich2017}. Here we formulate it in a way that makes the comparison between hypervolume and magnitude completely parallel.

Consider a smooth biobjective maximization problem
\[
\max_{z\in\Omega} F(z)=(F_1(z),F_2(z)),
\qquad \Omega\subseteq \R^d,
\]
with a fixed anchor point $r=(r_1,r_2)$ satisfying $r_j\le F_j(z)$ for all relevant objective vectors. For a population $P=\{z^{(1)},\dots,z^{(\mu)}\}$, define the translated objective vectors
\[
\widetilde y^{(i)}=F(z^{(i)})-r
\in \R_{\ge 0}^2,
\]
and the translated dominated region
\[
\widetilde D(P)=\bigcup_{i=1}^{\mu}[0,\widetilde y_1^{(i)}]\times[0,\widetilde y_2^{(i)}].
\]
We maximize either
\[
J_{\mathrm{HV}}(P)=\HV(\widetilde D(P))
\qquad\text{or}\qquad
J_{\mathrm{Mag}}(P)=\Mag(\widetilde D(P)).
\]

\myparagraph{Generic pull-back formula}
Assume the indicator $J(P)$ is differentiable with respect to the active objective vectors after sorting and removing dominated duplicates. Then the population gradient with respect to the $i$th search point is
\[
\nabla_{z^{(i)}} J(P)=D F(z^{(i)})^\top \nabla_{\widetilde y^{(i)}} J(P).
\]
In the numerical experiments below we use the \emph{normalized pull-back direction}
\[
g^{(i)}(P)=
\begin{cases}
\dfrac{D F(z^{(i)})^\top \nabla_{\widetilde y^{(i)}} J(P)}{\left\|D F(z^{(i)})^\top \nabla_{\widetilde y^{(i)}} J(P)\right\|_2}, & \text{if } D F(z^{(i)})^\top \nabla_{\widetilde y^{(i)}} J(P)\neq 0,\\[1.0ex]
0, & \text{otherwise.}
\end{cases}
\]
A projected ascent step is therefore implemented as
\[
z_{k+1}^{(i)}=\Pi_{\Omega}\Bigl(z_k^{(i)}+\eta_k \, g^{(i)}(P_k)\Bigr),
\qquad i=1,\dots,\mu.
\]
This normalization makes the update less sensitive to large indicator derivatives at a few isolated points and proved numerically more robust than the unnormalized pull-back in the experiments reported below.

\myparagraph{Practical algorithm}
\myparagraph{Projected set-gradient ascent}
\begin{enumerate}[label=\arabic*.]
    \item Start with a population $P_0=\{z_0^{(1)},\dots,z_0^{(\mu)}\}\subset\Omega$.
    \item Map the population to objective space and translate by the anchor point.
    \item Remove duplicate and dominated objective vectors.
    \item Sort the remaining active points along the front.
    \item Compute the indicator derivative in objective space.
    \item Pull it back by the Jacobian $DF(z)^\top$.
    \item Take a projected ascent step with step size $\eta_k>0$.
    \item Reinsert inactive points if desired, or simply keep only the active population.
\end{enumerate}

A self-contained support repository accompanying this report is available at

\begin{center}
\url{https://github.com/emmerichmtm/The-Magnitude-of-Dominated-Sets-ArXiV-Support}
\end{center}

\begin{remark}
For general problems one may add a small repulsion term or a proximal term in decision space to avoid collapse of multiple search points. In the present example the front is one-dimensional and the indicator is strictly concave in the ordered front parameters, so no extra regularization is required.
\end{remark}

\section{First biobjective test problem}

We first study a deliberately simple biobjective example for which the geometry can be analyzed almost completely. This makes it possible to compare the induced hypervolume-optimal and magnitude-optimal point distributions in closed form.

We consider the prescribed objectives
\[
F_1(x,y)=1-x^2,
\qquad
F_2(x,y)=1-(1-x^2)=x^2,
\qquad
(x,y)\in[-2,2]\times[-2,2].
\]
The second coordinate $y$ is neutral, so the Jacobian is
\[
DF(x,y)=
\begin{pmatrix}
-2x & 0\\
 2x & 0
\end{pmatrix}.
\]

\myparagraph{Efficient set and Pareto front}
\begin{proposition}
For the above problem, the efficient set is the whole search box
\[
E=[-2,2]\times[-2,2],
\]
and the Pareto front is the line segment
\[
\mathcal P=\{(1-t,t): 0\le t\le 4\}.
\]
\end{proposition}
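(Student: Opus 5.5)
Note that F_1+F_2 = 1 identically, so the objective vector lies on the line \(\{u\in\R^2: u_1+u_2=1\}\). The plan is to reduce everything to this observation together with the range of \(x^2\).

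First, compute the image. Put \(t=x^2\). As \(x\) ranges over \([-2,2]\), \(t\) ranges over exactly \([0,4]\), and \(y\) does not enter. Hence
\[
F(\Omega)=\{(1-t,t):0\le t\le 4\}.
\]

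Second, show that every point of the image is nondominated. Take two image points \((1-s,s)\) and \((1-t,t)\), and suppose the first weakly dominates the second in the maximization sense. Then \(1-s\ge 1-t\) and \(s\ge t\), which forces \(s=t\). So the two vectors coincide, and no image point is strictly dominated by another. Consequently the Pareto front is the whole image segment \(\mathcal P\). Since every decision vector maps into \(\mathcal P\) and is therefore nondominated, the efficient set is the whole box \(E=[-2,2]\times[-2,2]\).

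There is no real obstacle here; the only care needed is the range claim. Surjectivity onto \([0,4]\) follows from continuity of \(x^2\) on \([-2,2]\), since it attains \(0\) at \(x=0\) and \(4\) at \(x=\pm2\). The sum identity then makes the mutual nondominance argument a one-line calculation.
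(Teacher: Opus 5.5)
Your proof is correct and follows essentially the same route as the paper: parametrize the image by $t=x^2\in[0,4]$, then observe that $(1-s,s)$ and $(1-t,t)$ can be comparable only when $s=t$. Hence every image point is nondominated, the whole box is efficient, and the front is the full image segment. Your explicit surjectivity check and the identity $F_1+F_2=1$ simply spell out details that the paper states directly.
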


\begin{proof}
The objectives depend only on $x$ through $t=x^2\in[0,4]$, and every search point has image $(1-t,t)$. If $t\neq s$, then $(1-t,t)$ and $(1-s,s)$ are incomparable in the product order because one has a larger first coordinate exactly when the other has a larger second coordinate. Thus no objective image dominates another distinct one, so every search point is efficient. The full set of objective images is exactly the stated segment.
\end{proof}

\begin{remark}
This example is degenerate in a useful way: the entire decision domain is efficient. Therefore the set-gradient method does not need to drive points \emph{onto} the Pareto set. Instead, it acts purely as a front-distribution mechanism, arranging a finite representative population along the Pareto front.
\end{remark}

\myparagraph{Anchor point and translated front}
To compute hypervolume and magnitude, we choose the natural anchor point
\[
r=(-3,0),
\]
which is the south-west corner of the objective image. Translating by $r$ gives the nonnegative coordinates
\[
\widetilde F(x,y)=F(x,y)-r=(4-x^2, x^2)=(4-t,t),
\qquad 0\le t\le 4.
\]
Hence the translated Pareto front is the segment from $(4,0)$ to $(0,4)$.

For a sorted sample
\[
0\le t_1\le\cdots\le t_\mu\le 4,
\]
the translated dominated region is
\[
\widetilde D=\bigcup_{i=1}^{\mu}[0,4-t_i]\times[0,t_i].
\]

\section{Indicator formulas on the front parameter}

To understand the two indicators on this first example, it is convenient to parametrize the front explicitly and derive the corresponding one-dimensional formulas. These formulas lead directly to the exact optimal distributions.

\myparagraph{Hypervolume}
For an ordered sample $t_1\le\cdots\le t_\mu$ define $t_0:=0$. Then
\[
\HV(\widetilde D)=\sum_{i=1}^{\mu}(t_i-t_{i-1})(4-t_i).
\]
Differentiating gives
\[
\frac{\partial \HV}{\partial t_i}=t_{i-1}+t_{i+1}-2t_i,
\qquad i=1,\dots,\mu-1,
\]
with the boundary derivative
\[
\frac{\partial \HV}{\partial t_\mu}=4-2t_\mu+t_{\mu-1},
\]
where $t_0:=0$.

\myparagraph{Magnitude}
Since the translated dominated region is zero-anchored,
\[
\Mag(\widetilde D)=1+\frac{X+Y}{2}+\frac{\HV(\widetilde D)}{4},
\]
where
\[
X=4-t_1,
\qquad
Y=t_\mu.
\]
Therefore
\[
J_{\mathrm{Mag}}(t_1,\dots,t_\mu)
=1+\frac{4-t_1+t_\mu}{2}+\frac{\HV(\widetilde D)}{4},
\]
and hence
\[
\frac{\partial J_{\mathrm{Mag}}}{\partial t_1}=-\frac12+\frac14\frac{\partial \HV}{\partial t_1},
\qquad
\frac{\partial J_{\mathrm{Mag}}}{\partial t_\mu}=\frac12+\frac14\frac{\partial \HV}{\partial t_\mu},
\]
while for $2\le i\le \mu-1$,
\[
\frac{\partial J_{\mathrm{Mag}}}{\partial t_i}=\frac14\frac{\partial \HV}{\partial t_i}.
\]

\myparagraph{Pull-back to the decision variable}
On the representative branch $x\in[0,2]$ we have $t=x^2$ and $dt/dx=2x$. Therefore
\[
\frac{dJ}{dx_i}=2x_i\frac{dJ}{dt_i},
\]
so a correct projected ascent step for the user-specified problem is
\[
x_{i}^{k+1}=\Pi_{[0,2]}\Bigl(x_i^k+\eta_k\,2x_i^k\,\frac{\partial J}{\partial t_i}(t_1^k,\dots,t_\mu^k)\Bigr),
\qquad
y_i^{k+1}=y_i^k.
\]
After each step the points are re-sorted by $t_i=(x_i^k)^2$. This is exactly the normalized set-gradient pull-back formula above, specialized to the Jacobian of the present problem.

\section{Exact optimal \texorpdfstring{$\mu$}{mu}-distributions}

We now solve the first example exactly at the level of indicator-optimal populations. This also places the present calculations next to the literature on exact optimal distributions for linear fronts.

For linear fronts, exact or analytically characterized hypervolume-optimal distributions have been studied extensively in two and three objectives; see, for example, Brockhoff~\cite{Brockhoff2010}, Ishibuchi et al.~\cite{IshibuchiImadaMasuyamaNojima2019}, Singh~\cite{Singh2025}, and Ishibuchi et al.~\cite{IshibuchiNanPang2025}. The formulas below recover the corresponding two-objective linear-front pattern in the present translated setting.
\begin{proposition}[hypervolume maximizer on the translated front]
For fixed population size $\mu$, hypervolume is uniquely maximized by the arithmetic progression
\[
t_i^{\mathrm{HV}}=\frac{4i}{\mu+1},
\qquad i=1,\dots,\mu.
\]
\end{proposition}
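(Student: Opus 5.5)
The hypervolume formula in the previous section reduces this to a concrete quadratic optimization. For a sorted sample $0\le t_1\le\cdots\le t_\mu\le 4$ with $t_0:=0$ we have
\[
\HV(t)=\sum_{i=1}^{\mu}(t_i-t_{i-1})(4-t_i).
\]
I will rewrite this in gap variables. Set $s_i:=t_i-t_{i-1}\ge 0$ for $i=1,\dots,\mu$ and $s_{\mu+1}:=4-t_\mu\ge 0$. Then $s_1+\dots+s_{\mu+1}=4$, and $4-t_i=s_{i+1}+\dots+s_{\mu+1}$. Hence
\[
\HV=\sum_{1\le i<j\le \mu+1} s_i s_j=\tfrac12\Bigl(16-\sum_{k=1}^{\mu+1}s_k^2\Bigr).
\]
This identity is the heart of the argument, and I would verify it directly by expanding the sums. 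It also has a geometric reading: the dominated staircase region is the triangle below the anti-diagonal with the corner pieces cut off. The pieces not covered are the $\mu+1$ small triangles between consecutive points, with areas $s_k^2/2$.

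With this identity, maximizing $\HV$ is the same as minimizing $\sum_k s_k^2$ subject to $\sum_k s_k=4$ and $s_k\ge 0$. By Cauchy--Schwarz (or strict convexity of the square), $\sum_k s_k^2\ge 16/(\mu+1)$. Equality holds if and only if all $s_k=4/(\mu+1)$. This gives $t_i=4i/(\mu+1)$, and uniqueness follows from the equality case. The optimal value $\HV=8\mu/(\mu+1)$ comes out as a by-product.

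Two loose ends remain. The first is that the proposition is stated for populations that might not be sorted or might contain duplicate points. Sorting does not change the dominated region, so we may assume the $t_i$ are ordered. A duplicate gives some $s_i=0$ with $1\le i\le\mu$, which is strictly suboptimal by the equality case. So the maximizer has $\mu$ distinct points, and it is unique as a set. The second is a consistency check: the stationarity equations $\partial\HV/\partial t_i=t_{i-1}+t_{i+1}-2t_i=0$ and $4-2t_\mu+t_{\mu-1}=0$ from the previous section should be satisfied by the arithmetic progression, and indeed they are. I do not expect a real obstacle. The only delicate step is to check that the gap-variable identity correctly handles the boundary terms $s_1=t_1$ and $s_{\mu+1}=4-t_\mu$, and that it remains valid when some gaps are zero.
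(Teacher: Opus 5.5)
Your proof is correct, and it takes a different route from the paper's. The paper only writes down the first-order conditions $2t_i=t_{i-1}+t_{i+1}$ for $1\le i\le\mu-1$ and $2t_\mu=4+t_{\mu-1}$ (with $t_0=0$). It solves this linear recurrence to get $t_i=ic$ with $(\mu+1)c=4$, and stops there. Three facts are left implicit: that this critical point is the unique global maximizer, and that the ordering and box constraints $0\le t_1\le\dots\le t_\mu\le 4$ never become active. Strict concavity is only asserted in the remark of the set-gradient section.

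Your gap substitution rewrites the objective as $\HV=8-\tfrac12\sum_{k=1}^{\mu+1}s_k^2$ on the simplex $\sum_k s_k=4$, $s_k\ge 0$, and this supplies all of those pieces at once:
\begin{itemize}
\item it makes strict concavity explicit;
\item it gives global optimality and uniqueness through the equality case of Cauchy--Schwarz, which does not even use $s_k\ge0$ and so explains why the unconstrained critical point is automatically feasible;
\item it disposes of ties and of points at $t=0$ or $t=4$;
\item it yields the optimal value $8\mu/(\mu+1)$ and the picture of $\mu+1$ uncovered corner triangles.
\end{itemize}
The paper's route is shorter and is exactly the first-order system that the set-gradient iteration drives to zero. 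On its own, however, it only locates a stationary point.

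Your identity also pays off for the paper's next proposition. It gives $J_{\mathrm{Mag}}=7-\tfrac12(s_1+s_{\mu+1})-\tfrac18\sum_k s_k^2$. A KKT check at $s_1=s_{\mu+1}=0$ shows that the endpoint-including configuration is the maximizer only when $\mu\ge 3$. For $\mu=2$, the pair $t=(2/3,10/3)$ gives $J_{\mathrm{Mag}}=16/3$, which beats the value $5$ at $t=(0,4)$. A stationarity-only argument does not reveal this restriction.
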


\begin{proof}
The first-order equations are
\[
2t_i=t_{i-1}+t_{i+1},
\quad i=1,\dots,\mu-1,
\qquad
2t_\mu=4+t_{\mu-1},
\]
with $t_0=0$. Hence the maximizer is an arithmetic progression $t_i=ic$, and the last equation gives $(\mu+1)c=4$.
\end{proof}

\begin{proposition}[magnitude maximizer on the translated front]
For fixed population size $\mu$, magnitude is uniquely maximized by including both extreme points and spacing the remaining points uniformly between them:
\[
t_i^{\mathrm{Mag}}=\frac{4(i-1)}{\mu-1},
\qquad i=1,\dots,\mu.
\]
\end{proposition}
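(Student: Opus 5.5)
The plan is to write $J_{\mathrm{Mag}}$ as an explicit quadratic in the sorted parameters and show that it is strictly concave on the ordered region $\{0\le t_1\le\cdots\le t_\mu\le 4\}$. The maximizer can then be read off: first fix the two end parameters, then optimize over them. Throughout we assume $\mu\ge 2$. Since the dominated region does not depend on the labelling of the points, we may restrict attention to sorted samples; coincident points are allowed and do no harm.

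\textbf{Step 1: rewrite the hypervolume.} Start from $\HV(\widetilde D)=\sum_{i=1}^{\mu}(t_i-t_{i-1})(4-t_i)$ with $t_0=0$. Expanding and regrouping the cross terms $t_{i-1}t_i$ should give
\[
\HV(\widetilde D)=4t_\mu-\tfrac12 t_\mu^2-\tfrac12\sum_{i=1}^{\mu}(t_i-t_{i-1})^2 .
\]
This identity can be checked against the derivatives $\partial\HV/\partial t_i$ stated earlier. Substituting into the magnitude formula of Theorem~\ref{thm:planar-dominated-magnitude} gives
\[
J_{\mathrm{Mag}}=3-\tfrac{t_1}{2}+\tfrac{3t_\mu}{2}-\tfrac{t_\mu^2}{8}-\tfrac18\sum_{i=1}^{\mu}(t_i-t_{i-1})^2 .
\]
The quadratic form here is negative definite in the increments $t_1,\,t_2-t_1,\dots$, so $J_{\mathrm{Mag}}$ is strictly concave. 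Consequently any maximizer on the convex ordered region is unique, and KKT conditions are sufficient.

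\textbf{Step 2: reduce to the two end parameters.} Fix $a=t_1$ and $b=t_\mu$. The only remaining dependence is through the $\mu-1$ interior gaps $t_i-t_{i-1}$, $i\ge 2$, which are nonnegative and sum to $b-a$. By Cauchy--Schwarz, their sum of squares is minimized, with value $(b-a)^2/(\mu-1)$, exactly when the gaps are equal. Hence for fixed $a,b$ the optimal interior points are equally spaced, and we are left with
\[
\phi(a,b)=3-\tfrac a2+\tfrac{3b}{2}-\tfrac{a^2}{8}-\tfrac{b^2}{8}-\tfrac{(b-a)^2}{8(\mu-1)},\qquad 0\le a\le b\le 4 .
\]

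\textbf{Step 3: optimize the ends.} The function $\phi$ is concave, so it suffices to verify the KKT sign conditions at $(a,b)=(0,4)$. We need $\partial_a\phi\le 0$ there, since $a$ sits at its lower bound, and $\partial_b\phi\ge 0$, since $b$ sits at its upper bound. The computations are
\[
\partial_a\phi(0,4)=-\tfrac12+\tfrac{1}{\mu-1},\qquad \partial_b\phi(0,4)=\tfrac12-\tfrac{1}{\mu-1}.
\]
Both have the required sign for all $\mu\ge 2$, with equality only in the boundary case $\mu=2$. Thus $(0,4)$ maximizes $\phi$. Combined with the equal spacing from Step 2, this yields $t_i=4(i-1)/(\mu-1)$, and uniqueness follows from the strict concavity established in Step 1.

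The main obstacle is the borderline case $\mu=2$. There both partial derivatives vanish at $(0,4)$, so the sign argument alone does not single out this point, and uniqueness must come from strict concavity rather than strict KKT inequalities. I will check that the strictly concave form from Step 1 indeed covers this case. As a secondary point, I will verify carefully the algebraic identity for $\HV$ in Step 1, since the rest of the argument depends on it.
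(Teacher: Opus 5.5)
Your Step 3 sign check is wrong at $\mu=2$, and the slip cannot be repaired, because the proposition itself is false for $\mu=2$. Your formulas $\partial_a\phi(0,4)=-\tfrac12+\tfrac{1}{\mu-1}$ and $\partial_b\phi(0,4)=\tfrac12-\tfrac{1}{\mu-1}$ are correct. For $\mu=2$ they equal $+\tfrac12$ and $-\tfrac12$, so both KKT sign conditions are violated. Solving $\nabla\phi=0$ for $\mu=2$ gives the feasible point $(a,b)=(2/3,10/3)$.

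A direct check confirms this. The two points $(10/3,2/3)$ and $(2/3,10/3)$ give $\HV=4$ and $X=Y=10/3$, hence $\Mag=1+\tfrac{10}{3}+1=\tfrac{16}{3}$. The two extreme points $(4,0)$ and $(0,4)$ give only $\HV=0$ and $\Mag=5$.

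The equality case you attribute to $\mu=2$ actually occurs at $\mu=3$. There both partials vanish, $(0,2,4)$ is the unconstrained stationary point of the strictly concave $J_{\mathrm{Mag}}$, and uniqueness follows from strict concavity exactly as you describe. For $\mu\ge 4$ both KKT inequalities are strict.

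Your Steps 1 and 2 are correct: the identity $\HV=4t_\mu-\tfrac12 t_\mu^2-\tfrac12\sum_i(t_i-t_{i-1})^2$, the strict concavity, and the equal-gap reduction all hold. So your argument is a complete proof for $\mu\ge 3$, and you should restrict the claim to that range.

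This is also where your route differs from the paper's. The paper argues that the linear term $(4-t_1+t_\mu)/2$ alone forces $t_1=0$ and $t_\mu=4$. That ignores the pull of $\partial\HV/\partial t_1=t_2-2t_1$. At $t_1=0$ one has $\partial J_{\mathrm{Mag}}/\partial t_1=-\tfrac12+\tfrac{t_2}{4}$, which is nonpositive only when $t_2\le 2$, that is, $\mu\ge 3$. Your concavity-plus-KKT argument is the rigorous version of that step, and it exposes the $\mu=2$ exception that the paper's proof misses.
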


\begin{proof}
The linear term $(4-t_1+t_\mu)/2$ is increased by decreasing $t_1$ and increasing $t_\mu$, so the maximizer must satisfy $t_1=0$ and $t_\mu=4$. With these endpoints fixed, the remaining variables maximize hypervolume on the shortened interval, hence form an arithmetic progression.
\end{proof}

For $\mu=8$ this yields
\[
\boxed{t_i^{\mathrm{HV}}=\frac{4i}{9},\quad i=1,\dots,8}
\qquad\text{and}\qquad
\boxed{t_i^{\mathrm{Mag}}=\frac{4(i-1)}{7},\quad i=1,\dots,8.}
\]
On the representative branch $x\in[0,2]$ these become
\[
x_i^{\mathrm{HV}}=2\sqrt{\frac{i}{9}},
\qquad
x_i^{\mathrm{Mag}}=2\sqrt{\frac{i-1}{7}}.
\]
Numerically,
\[
(x_i^{\mathrm{HV}})_{i=1}^8\approx(0.6667,0.9428,1.1547,1.3333,1.4907,1.6330,1.7638,1.8856),
\]
\[
(x_i^{\mathrm{Mag}})_{i=1}^8\approx(0.0000,0.7559,1.0690,1.3093,1.5119,1.6903,1.8516,2.0000).
\]

\section{Execution of the set-gradient ascent for \texorpdfstring{$\mu=8$}{mu=8}}

After the exact formulas, it is useful to verify that the projected set-gradient method reproduces the same distributions numerically. We therefore report one representative execution for a fixed population size.

We executed the projected set-gradient ascent on the representative branch $x\in[0,2]$ with $y_i\equiv0$, starting from the initial population
\[
(x_i^0)_{i=1}^8\approx(0.2076,0.3903,0.7841,1.0471,1.2343,1.6137,1.8890,1.9537).
\]
The step size schedule was
\[
\eta_k=0.08\cdot 0.9995^k,
\]
with re-sorting after every iteration and $5000$ iterations in total.

The resulting terminal values were
\[
(x_i^{\mathrm{HV,run}})_{i=1}^8\approx(0.6667,0.9428,1.1547,1.3333,1.4907,1.6330,1.7638,1.8856),
\]
\[
(x_i^{\mathrm{Mag,run}})_{i=1}^8\approx(0.0000,0.7559,1.0690,1.3093,1.5119,1.6903,1.8516,2.0000),
\]
which agree with the exact formulas above up to the displayed precision. In other words, on this problem the projected set-gradient ascent reproduces the exact indicator-maximizing populations.

\begin{figure}[p]
\centering
\begin{tikzpicture}
\begin{axis}[
width=12.8cm,height=6.8cm,
xlabel={$x$},ylabel={$y$},
xmin=-2.2,xmax=2.2,ymin=-2.2,ymax=2.2,
axis lines=middle,
axis on top,
xlabel style={at={(axis description cs:0.98,0.53)},anchor=west,fill=white,inner sep=1pt},
ylabel style={at={(axis description cs:0.52,0.98)},anchor=south,fill=white,inner sep=1pt},
legend style={draw=none,fill=none,at={(0.5,-0.20)},anchor=north,legend columns=2,/tikz/every even column/.append style={column sep=0.7cm}},
clip=false]
\path[fill=softgray,opacity=0.45] (axis cs:-2,-2) rectangle (axis cs:2,2);
\addlegendentry{efficient set $E=[-2,2]^2$}
\addplot[black,thick,dashed,domain=-2:2,samples=2] {0};
\addlegendentry{representative section $y=0$}
\addplot[only marks,mark=*,mark size=2.7pt,hvblue] coordinates {
(0.666667,0) (0.942809,0) (1.154701,0) (1.333333,0)
(1.490712,0) (1.632993,0) (1.763834,0) (1.885618,0)};
\addlegendentry{HV-optimal $8$ points}
\addplot[only marks,mark=square*,mark size=2.8pt,magred] coordinates {
(0.000000,0) (0.755929,0) (1.069045,0) (1.309307,0)
(1.511858,0) (1.690309,0) (1.851640,0) (2.000000,0)};
\addlegendentry{Magnitude-optimal $8$ points}
\end{axis}
\end{tikzpicture}
\caption{Decision-space picture. The efficient set is the entire search box, because the specified objectives depend only on $x$. The blue and red populations are shown on the representative section $y=0$, which is sufficient because $y$ is neutral.}
\label{fig:decision}
\end{figure}
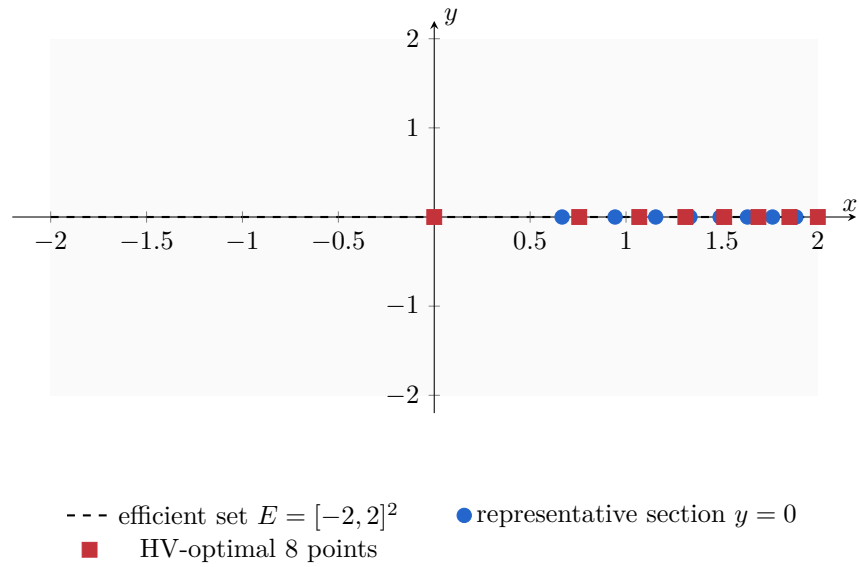

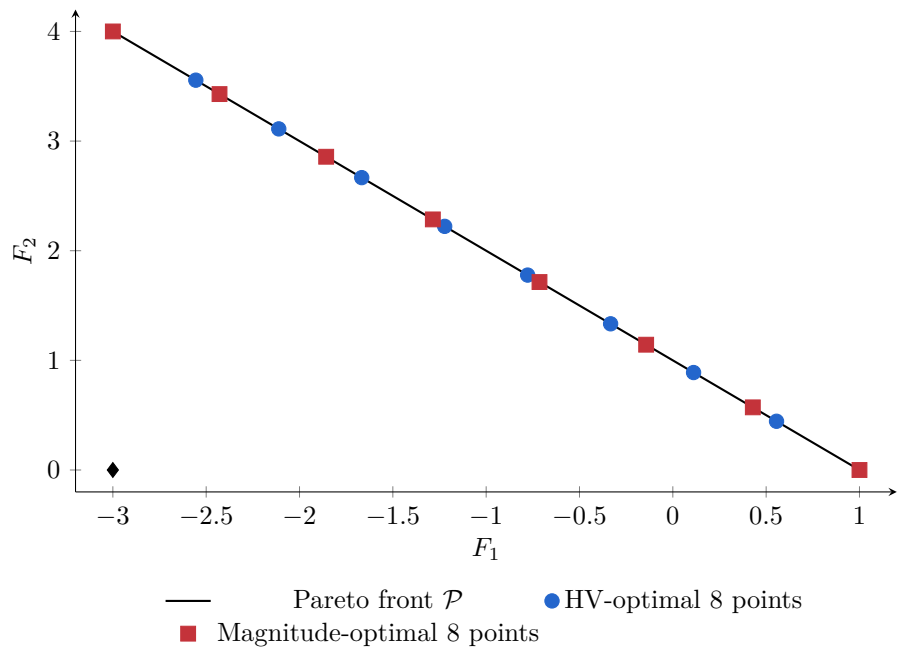
\begin{figure}[p]
\centering
\begin{tikzpicture}
\begin{axis}[
width=12.5cm,height=8cm,
xlabel={$F_1$},ylabel={$F_2$},
xmin=-3.2,xmax=1.2,ymin=-0.2,ymax=4.2,
axis lines=left,
legend style={draw=none,fill=none,at={(0.5,-0.18)},anchor=north,legend columns=2},
xlabel style={fill=white,inner sep=1pt},ylabel style={fill=white,inner sep=1pt}]
\addplot[black,thick,domain=0:4,samples=2] ({1-x},{x});
\addlegendentry{Pareto front $\mathcal P$}
\addplot[only marks,mark=*,mark size=2.7pt,hvblue] coordinates {
(0.555556,0.444444) (0.111111,0.888889) (-0.333333,1.333333) (-0.777778,1.777778)
(-1.222222,2.222222) (-1.666667,2.666667) (-2.111111,3.111111) (-2.555556,3.555556)};
\addlegendentry{HV-optimal $8$ points}
\addplot[only marks,mark=square*,mark size=2.8pt,magred] coordinates {
(1.000000,0.000000) (0.428571,0.571429) (-0.142857,1.142857) (-0.714286,1.714286)
(-1.285714,2.285714) (-1.857143,2.857143) (-2.428571,3.428571) (-3.000000,4.000000)};
\addlegendentry{Magnitude-optimal $8$ points}
\addplot[only marks,mark=diamond*,mark size=2.8pt,black] coordinates {(-3,0)};
\node[anchor=north west,fill=white,inner sep=1pt] at (rel axis cs:-1.02,0.98) {$r=(-3,0)$};
\end{axis}
\end{tikzpicture}
\caption{Objective-space picture for the user-specified objectives. Hypervolume is computed with respect to the anchor point $r=(-3,0)$. Hypervolume (blue) leaves a gap to each extreme of the front; magnitude (red) includes both extremes because of the extra shadow term.}
\label{fig:front}
\end{figure}

\section{Second biobjective test problem with a nontrivial Pareto set}

The first example isolates the distributional effect of the indicators on a degenerate efficient set. We now turn to a nondegenerate problem, where the method must first approach the efficient set and then distribute points along the front.

To complement the degenerate first example, we consider the symmetric quadratic problem
\[
F_1(x,y)=1-(x-1)^2-y^2,
\qquad
F_2(x,y)=1-(x+1)^2-y^2,
\qquad
(x,y)\in[-2,2]^2.
\]

\myparagraph{Efficient set and Pareto front}
\begin{proposition}
For this problem the efficient set is
\[
E_2=\{(x,0): -1\le x\le 1\},
\]
and the Pareto front is
\[
\mathcal P_2=\bigl\{(1-(x-1)^2,\;1-(x+1)^2): -1\le x\le 1\bigr\}.
\]
\end{proposition}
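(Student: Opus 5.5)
The plan is to characterize efficiency through the first-order (Fritz John) conditions and then confirm sufficiency by a direct dominance argument. Since we maximize, a point $(x,y)$ is efficient if and only if no other point in $[-2,2]^2$ improves one of $F_1,F_2$ without worsening the other.

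\textbf{Step 1: off the axis, points are dominated.} Both objectives have the form $g(x)-y^2$. Replacing $y$ by $0$ strictly increases both $F_1$ and $F_2$ whenever $y\neq0$. Hence every efficient point satisfies $y=0$, and it suffices to study the one-dimensional problem $f_1(x)=1-(x-1)^2$, $f_2(x)=1-(x+1)^2$ on $[-2,2]$.

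\textbf{Step 2: outside $[-1,1]$, points are dominated.} On $[-2,-1)$, both $f_1$ and $f_2$ are strictly increasing. Their derivatives are $-2(x-1)>0$ and $-2(x+1)>0$ there. So moving $x$ up to $-1$ strictly improves both objectives. Symmetrically, on $(1,2]$ both functions are strictly decreasing, and moving $x$ down to $1$ strictly improves both.

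\textbf{Step 3: points in $[-1,1]$ are efficient.} Take $x_0\in[-1,1]$, and suppose some $(x,y)$ weakly dominates $(x_0,0)$. Since $F_i(x,y)\le f_i(x)$, we may reduce to comparing $x$ with $x_0$ on the line $y=0$. On $[-1,1]$, $f_1$ is increasing and $f_2$ is decreasing. Also $f_1(x)\ge f_1(x_0)$ forces $|x-1|\le|x_0-1|$, and $f_2(x)\ge f_2(x_0)$ forces $|x+1|\le|x_0+1|$. With $x_0\in[-1,1]$ these give $x\ge x_0$ and $x\le x_0$ respectively, so $x=x_0$. Then both inequalities are equalities, which forces $y=0$. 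Thus no point strictly dominates $(x_0,0)$.

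\textbf{Step 4: the front.} Applying $F$ to $E_2$ gives exactly $\mathcal P_2$.

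The only delicate part is Step 3. There one has to handle candidate dominators $x$ outside $[-1,1]$, and the absolute-value inequalities dispose of this cleanly. An alternative route is to note that the weighted-sum scalarization with weights $((1+x_0)/2,(1-x_0)/2)$ has $(x_0,0)$ as its unique maximizer of a strictly concave function. This also yields proper efficiency.
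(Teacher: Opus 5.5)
Your proof is correct and follows the paper's argument step for step: force $y=0$, discard $|x|>1$ because both objectives improve toward $\pm1$, then show non-dominance on $[-1,1]$. The one refinement is that your Step~3 explicitly rules out dominators from the whole box via the absolute-value inequalities, whereas the paper only compares points within the segment and leaves the rest to transitivity with the first two steps. A minor caveat applies only to your closing aside: at $x_0=\pm1$ one of the weights vanishes, and these endpoints are in fact not properly efficient in Geoffrion's sense, because the front's tangent $(2(1-x),-2(1+x))$ is axis-parallel there and trade-off ratios are unbounded; this has no bearing on the proposition.
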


\begin{proof}
For fixed $x$, both objectives decrease when $|y|$ increases, so every efficient point must satisfy $y=0$. On the line $y=0$, points with $x<-1$ are dominated by $x=-1$, and points with $x>1$ are dominated by $x=1$. For $x\in[-1,1]$, increasing $x$ improves $F_1$ and worsens $F_2$, so distinct points are mutually non-dominating. Hence the efficient set is exactly the segment stated above, and its image is the Pareto front.
\end{proof}

We again use a common anchor point, now chosen as
\[
r=(-4,-4).
\]
With this choice, the translated front is zero-anchored and the same $2$-dimensional magnitude formula applies to the dominated region.

\myparagraph{Correct set-gradient pull-back}
Here
\[
DF(x,y)=
\begin{pmatrix}
-2(x-1) & -2y\\
-2(x+1) & -2y
\end{pmatrix}.
\]
For an active population point $z^{(i)}=(x_i,y_i)$ with translated objective vector $\widetilde y^{(i)}$, the correct ascent direction is
\[
\nabla_{z^{(i)}}J(P)=DF(x_i,y_i)^\top \nabla_{\widetilde y^{(i)}}J(P).
\]
Thus both coordinates are updated. In particular, the $y$-component is
\[
\frac{\partial J}{\partial y_i}
=
-2y_i\left(\frac{\partial J}{\partial \widetilde y^{(i)}_1}
+
\frac{\partial J}{\partial \widetilde y^{(i)}_2}\right),
\]
which vanishes only on the Pareto set $y=0$ (or when the objective-space derivative vanishes). This is the mechanism by which the method drives points toward the efficient set in this second problem.

\myparagraph{Numerical execution for $\mu=8$}
We executed the projected set-gradient ascent with $\mu=8$ active points, starting from a random population in the search box and projecting after each step onto $[-2,2]^2$. For stability we used backtracking on a small initial step size, normalized each active pull-back direction to unit Euclidean length, and re-sorted the active objective vectors after every update. The terminal populations on the Pareto set were as follows.

For hypervolume maximization:
\[
\begin{aligned}
x^{\mathrm{HV}} &\approx (-0.8195,\,-0.5526,\,-0.3221,\,-0.1060,\\
&\qquad\quad\ 0.1060,\,0.3221,\,0.5526,\,0.8195),\\
y^{\mathrm{HV}}&\approx 0.
\end{aligned}
\]

For magnitude maximization:
\[
\begin{aligned}
x^{\mathrm{Mag}} &\approx (-0.9153,\,-0.6069,\,-0.3517,\,-0.1155,\\
&\qquad\quad\ 0.1155,\,0.3517,\,0.6069,\,0.9153),\\
y^{\mathrm{Mag}}&\approx 0.
\end{aligned}
\]

Thus the second problem shows both aspects of the method: the pull-back drives the population onto the Pareto set, and then the two indicators distribute the points differently along the front. As in the first example, magnitude places more emphasis on the front extremes than hypervolume.

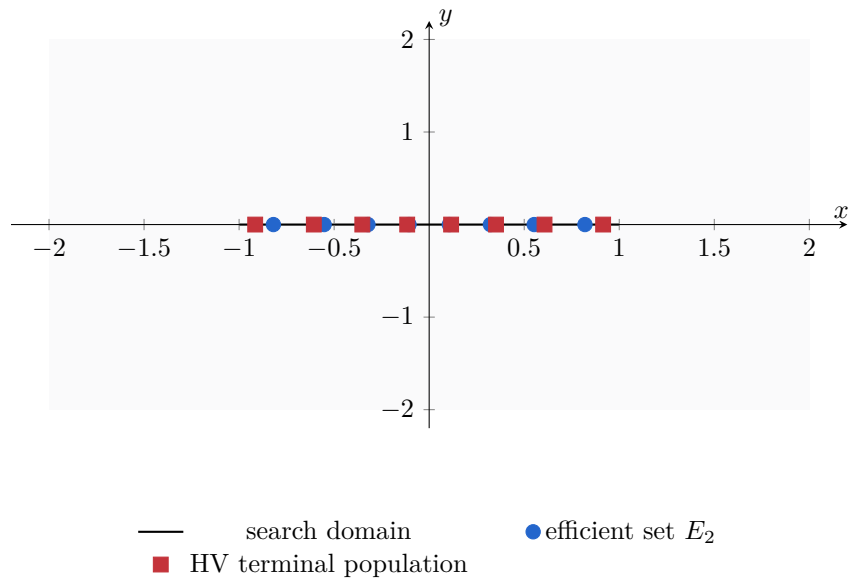
\begin{figure}[p]
\centering
\begin{tikzpicture}
\begin{axis}[
width=12.7cm,height=7.0cm,
xlabel={$x$},ylabel={$y$},
xmin=-2.2,xmax=2.2,ymin=-2.2,ymax=2.2,
axis lines=middle,
axis on top,
xlabel style={at={(axis description cs:0.98,0.53)},anchor=west,fill=white,inner sep=1pt},
ylabel style={at={(axis description cs:0.52,0.98)},anchor=south,fill=white,inner sep=1pt},
legend style={draw=none,fill=none,at={(0.5,-0.20)},anchor=north,legend columns=2,/tikz/every even column/.append style={column sep=0.7cm}},
clip=false]
\path[fill=softgray,opacity=0.45] (axis cs:-2,-2) rectangle (axis cs:2,2);
\addlegendentry{search domain}
\addplot[black,thick,domain=-1:1,samples=2] {0};
\addlegendentry{efficient set $E_2$}
\addplot[only marks,mark=*,mark size=2.7pt,hvblue] coordinates {
(-0.819514,0) (-0.552617,0) (-0.322145,0) (-0.106035,0)
(0.106035,0) (0.322145,0) (0.552617,0) (0.819514,0)};
\addlegendentry{HV terminal population}
\addplot[only marks,mark=square*,mark size=2.8pt,magred] coordinates {
(-0.915292,0) (-0.606902,0) (-0.351737,0) (-0.115523,0)
(0.115523,0) (0.351737,0) (0.606902,0) (0.915292,0)};
\addlegendentry{Magnitude terminal population}
\end{axis}
\end{tikzpicture}
\caption{Decision-space picture for the second problem. In contrast with the first example, the efficient set is the line segment $\{(x,0):-1\le x\le 1\}$, so the set-gradient ascent genuinely drives points onto a lower-dimensional Pareto set before redistributing them.}
\label{fig:decision2}
\end{figure}

\begin{figure}[p]
\centering
\begin{tikzpicture}
\begin{axis}[
width=12.4cm,height=8cm,
xlabel={$F_1$},ylabel={$F_2$},
xmin=-4.2,xmax=1.2,ymin=-4.2,ymax=1.2,
axis lines=left,
legend style={draw=none,fill=none,at={(0.5,-0.18)},anchor=north,legend columns=2},
xlabel style={fill=white,inner sep=1pt},ylabel style={fill=white,inner sep=1pt}]
\addplot[black,thick,domain=-1:1,samples=200] ({1-(x-1)^2},{1-(x+1)^2});
\addlegendentry{Pareto front $\mathcal P_2$}
\addplot[only marks,mark=*,mark size=2.7pt,hvblue] coordinates {
(-2.310632,0.967424) (-1.410625,0.799844) (-0.748052,0.540527) (-0.224106,0.200173)
(0.200173,-0.224106) (0.540527,-0.748052) (0.799844,-1.410625) (0.967424,-2.310632)};
\addlegendentry{HV terminal population}
\addplot[only marks,mark=square*,mark size=2.8pt,magred] coordinates {
(-2.668344,0.992825) (-1.582136,0.834947) (-0.826176,0.579718) (-0.244392,0.217698)
(0.217698,-0.244392) (0.579718,-0.826176) (0.834947,-1.582136) (0.992825,-2.668344)};
\addlegendentry{Magnitude terminal population}
\addplot[only marks,mark=diamond*,mark size=2.8pt,black] coordinates {(-4,-4)};
\node[anchor=north west,fill=white,inner sep=1pt] at (rel axis cs:0.02,0.98) {$r=(-4,-4)$};
\end{axis}
\end{tikzpicture}
\caption{Objective-space picture for the second problem. Again the magnitude-maximizing population reaches farther toward the extreme ends of the front than the hypervolume-maximizing one.}
\label{fig:front2}
\end{figure}
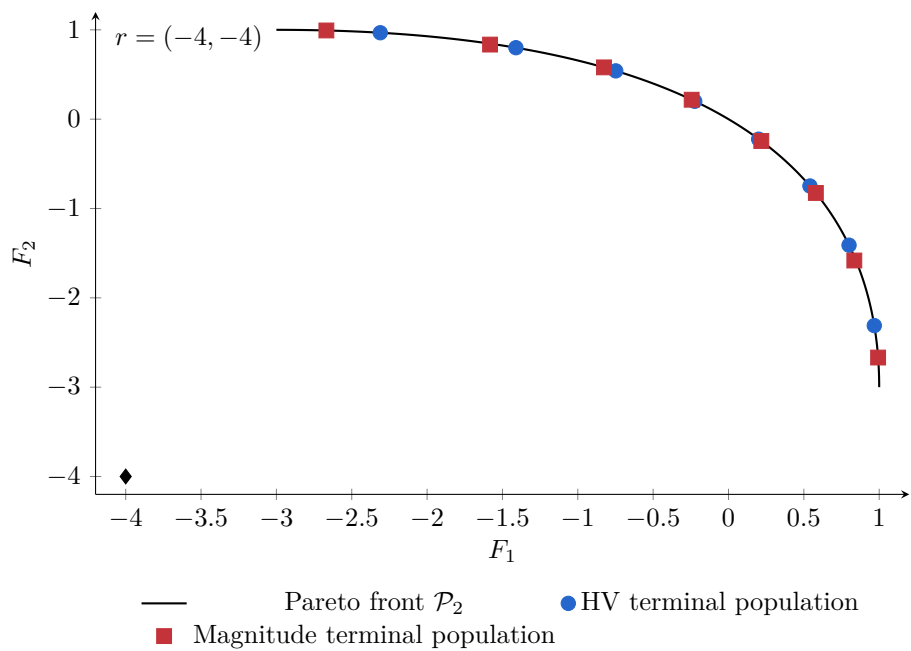

\section{Route to higher dimensions}

Before turning to explicit three-dimensional examples, we briefly indicate how the valuation picture extends beyond the planar case. The point of this section is not to give a complete theory, but to make the higher-dimensional pattern transparent.

For a single axis-parallel box
\[
B=\prod_{i=1}^n [0,L_i]\subset \ell_1^n,
\]
one always has
\[
\Mag(B)=\prod_{i=1}^n\Bigl(1+\frac{L_i}{2}\Bigr)=\sum_{k=0}^n \frac{e_k(L_1,\dots,L_n)}{2^k}.
\]
This already shows the general pattern: hypervolume is the top-dimensional term, while magnitude adds all lower-dimensional coordinate-shadow terms. The route from here to unions is the familiar one: pass from boxes to suitable unions via valuation arguments, and then to more general $\ell_1$-convex bodies by approximation. The symmetric simplex orbit above gives a first concrete three-dimensional illustration of how those extra shadow terms can change the preferred population geometry. For the purposes of indicator-based optimization, this suggests a family of ``magnitude-like'' indicators in higher dimensions that combine hypervolume with lower-dimensional shadow information. Analytical hypervolume calculations on structured fronts, algorithmic work on efficient hypervolume computation, and higher-order derivative information suggest one possible route toward efficient magnitude computations in higher dimension~\cite{Singh2025,GuerreiroFonsecaPaquete2021,DeutzEmmerichWang2023}.

\section{Three-dimensional simplex example}
\label{sec:3dexample}

We now pass from the planar setting to a genuinely three-dimensional benchmark. The simplex geometry is sufficiently symmetric to permit explicit calculations, while still revealing a clear contrast between hypervolume and magnitude.

We now add a first genuinely three-dimensional indicator comparison. Consider the unit simplex
\[
\Delta_2=\{(z_1,z_2,z_3)\in \R_{\ge 0}^3: z_1+z_2+z_3=1\}
\]
with the anchor point fixed at the origin. For a six-point population, a natural fully symmetric ansatz is the orbit of one ordered triple
\[
u\ge v\ge w\ge 0,\qquad u+v+w=1,
\]
under all six coordinate permutations:
\[
P(u,v,w)=\mathfrak S_3\cdot (u,v,w).
\]
The associated dominated region is
\[
D(u,v,w)=\bigcup_{p\in P(u,v,w)} [0,p]\subset \R_{\ge 0}^3.
\]
This gives a tidy first benchmark because the geometry is symmetric and all formulas can be written in closed form. It is also natural from the viewpoint of optimal-distribution studies for hypervolume and related indicators on low-dimensional Pareto fronts~\cite{IshibuchiImadaMasuyamaNojima2019,IshibuchiNanPang2025}.

\myparagraph{Hypervolume and magnitude formulas on the symmetric orbit}

\begin{proposition}[3D hypervolume on the six-point symmetric orbit]
For the dominated region $D(u,v,w)$ generated by $P(u,v,w)$ one has
\[
\HV(D(u,v,w))=6uvw-3uw^2-3v^2w+w^3.
\]
\end{proposition}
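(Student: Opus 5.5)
The plan is to compute $\HV(D(u,v,w))=\lambda_3(D(u,v,w))$ by slicing along the third coordinate (Cavalieri's principle). I do not use inclusion--exclusion over the six boxes, because slicing reduces everything to planar staircase areas that can be read off directly. For a height $h\in[0,u]$, the horizontal section of $D(u,v,w)$ at height $h$ is the planar zero-anchored union
\[
\bigcup_{p\in P(u,v,w),\,p_3\ge h}[0,p_1]\times[0,p_2].
\]
This section is piecewise constant in $h$, and it changes only at the levels $w\le v\le u$. The volume is therefore
\[
w\,A_1+(v-w)\,A_2+(u-v)\,A_3,
\]
where $A_1,A_2,A_3$ are the section areas on $[0,w]$, $(w,v]$ and $(v,u]$ respectively.

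Next I determine each section by identifying its nondominated planar rectangles, using only $u\ge v\ge w\ge 0$.
\begin{itemize}
\item On $[0,w]$ all six points are active. The planar pairs are $(u,v),(v,u),(u,w),(w,u),(v,w),(w,v)$. Only $(u,v)$ and $(v,u)$ are nondominated, so $A_1=2uv-v^2$ (two rectangles minus their overlap $[0,v]^2$).
\item On $(w,v]$ the active points are those with third coordinate $u$ or $v$. These give the planar pairs $(v,w),(w,v),(u,w),(w,u)$. Only $(u,w)$ and $(w,u)$ survive, so $A_2=2uw-w^2$.
\item On $(v,u]$ only the points with third coordinate $u$ remain, with planar pairs $(v,w),(w,v)$. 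Hence $A_3=2vw-w^2$.
\end{itemize}
In each case the union of two symmetric rectangles $[0,a]\times[0,b]\cup[0,b]\times[0,a]$ with $a\ge b$ has area $2ab-b^2$. This is the only planar fact needed, and it is the same staircase computation as in Example~\ref{ex:intro-three-point-mag}.

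Finally I expand
\[
w(2uv-v^2)+(v-w)(2uw-w^2)+(u-v)(2vw-w^2).
\]
The $uvw$ terms contribute $2+2+2=6$. The $vw^2$ terms cancel. What remains is $-v^2w-2v^2w=-3v^2w$, $-2uw^2-uw^2=-3uw^2$, and $+w^3$. This gives the stated $6uvw-3uw^2-3v^2w+w^3$.

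The main point requiring care is the dominance bookkeeping in each slab: deciding which points are active at a given height and which planar rectangles are redundant. Ties ($u=v$ or $v=w$) only make some slabs degenerate, i.e.\ of zero thickness, and do not affect the formula. As a check I would verify the degenerate cases: $w=0$ gives $0$, and $u=v=w=\tfrac13$ gives $\tfrac1{27}$, the single cube.
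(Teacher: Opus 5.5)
Your proof is correct, and it takes a different route from the paper's. The paper argues by symmetry. A point $x\ge 0$ lies in $D(u,v,w)$ if and only if its decreasing rearrangement is bounded by $(u,v,w)$. So $D$ splits into six congruent order chambers, and the volume is six times that of $\{x_1\ge x_2\ge x_3\ge 0:\ x_1\le u,\ x_2\le v,\ x_3\le w\}$. The paper then only sketches an inclusion--exclusion over threshold boxes, backed by a ``direct check''.

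You instead slice along $x_3$. The section is constant on the slabs $[0,w]$, $(w,v]$ and $(v,u]$, and each slab is a union of two mirror-image rectangles of area $2ab-b^2$. Your dominance bookkeeping, the slab areas $2uv-v^2$, $2uw-w^2$ and $2vw-w^2$, the expansion, and the tie cases all check out.

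The two routes buy different things:
\begin{itemize}
\item The chamber argument is more structural. It uses the full $\mathfrak S_3$ symmetry and the rearrangement fact (if $x\le p$ componentwise, the decreasingly sorted vectors satisfy the same inequality). It generalizes at once to permutation orbits in any dimension as $m!$ times the volume of an order-restricted box; here that is the single integral $6\int_0^w\int_{x_3}^{v}(u-x_2)\,dx_2\,dx_3$.
\item Your slicing argument is tied to $m=3$ and this particular orbit. In exchange, every step is elementary and explicit, it never uses $u+v+w=1$, and the bottom section $A_1=2uv-v^2$ is exactly the shadow area of $\proj_{\{1,2\}}D$. The paper needs that area in the next proposition on magnitude, so you get it for free.
\end{itemize}
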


\begin{proof}
By symmetry, every point in the union has sorted coordinates bounded by $(u,v,w)$ after a suitable coordinate permutation. Equivalently, the union can be partitioned into six congruent chambers according to the coordinate ordering, and inclusion-exclusion over the nested threshold boxes yields
\[
\vol_3(D)=6uvw-3uw^2-3v^2w+w^3.
\]
A direct check confirms that this formula is exact for every ordered triple $u\ge v\ge w\ge 0$ with $u+v+w=1$.
\end{proof}

\begin{proposition}[3D magnitude on the same orbit]
For the same dominated region one has
\[
\Mag(D(u,v,w))
=
1+\frac{3u}{2}
+\frac{3(2uv-v^2)}{4}
+\frac{6uvw-3uw^2-3v^2w+w^3}{8}.
\]
\end{proposition}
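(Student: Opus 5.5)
The plan is to apply the all-dimensional magnitude formula for dominated sets directly, in its three-dimensional specialization. $D(u,v,w)$ is by definition a finite union of six anchored boxes in $[0,\infty)^3$, so that theorem applies and gives
\[
\Mag(D)=1+\tfrac12\sum_{i}\lambda_1(\proj_{\{i\}}D)+\tfrac14\sum_{i<j}\lambda_2(\proj_{\{i,j\}}D)+\tfrac18\lambda_3(D).
\]
It then suffices to compute the three kinds of projection measures separately. The top-dimensional term is supplied by the preceding proposition on the hypervolume of the six-point orbit.

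\emph{One-dimensional shadows.} The projection of a union of anchored boxes is the union of the projected boxes, so $\proj_{\{i\}}D=\bigcup_{p\in P}[0,p_i]$. Every permutation of $(u,v,w)$ places $u$ in coordinate $i$ for some $p$, and $u$ is the largest entry. Hence this projection is $[0,u]$, of length $u$. Summing over the three axes gives $3u$, which produces the term $3u/2$.

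\emph{Two-dimensional shadows.} By the same remark, $\proj_{\{1,2\}}D$ is the union of the six rectangles $[0,p_1]\times[0,p_2]$, where $(p_1,p_2)$ ranges over the ordered pairs of distinct entries of $(u,v,w)$. I would first discard the pairs containing $w$. Since $u\ge v\ge w$, the rectangles for $(u,w)$ and $(v,w)$ lie inside the one for $(u,v)$, and those for $(w,u)$ and $(w,v)$ lie inside the one for $(v,u)$. The projection is therefore $[0,u]\times[0,v]\cup[0,v]\times[0,u]$. These two rectangles intersect in $[0,v]^2$, so inclusion--exclusion gives area $2uv-v^2$. The set $P$ is invariant under coordinate permutations, so the other two coordinate planes give the same value. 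The plane terms therefore total $3(2uv-v^2)/4$.

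\emph{Assembly.} Adding the constant $1$ (since $D\neq\emptyset$), the shadow terms above, and one eighth of $6uvw-3uw^2-3v^2w+w^3$ from the hypervolume proposition yields the stated formula. The step needing the most care is the dominance reduction in the planar projections, i.e.\ checking that the rectangles involving $w$ are absorbed. This follows immediately from the ordering $u\ge v\ge w$, so I do not expect a genuine obstacle. The only external input is the correctness of the hypervolume formula, which is taken as established.
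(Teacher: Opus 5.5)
Your proposal is correct and follows essentially the same route as the paper. Both specialize the projection formula to three dimensions, observe that each axis shadow is $[0,u]$ and each coordinate-plane shadow is $[0,u]\times[0,v]\cup[0,v]\times[0,u]$ of area $2uv-v^2$, and take the top-dimensional term from the hypervolume proposition; your explicit check that the rectangles involving $w$ are absorbed (using $u\ge v\ge w$) fills in a step the paper leaves implicit.
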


\begin{proof}
In the $\ell_1$ box setting, magnitude is the sum of positive shadow terms. Here each one-dimensional coordinate projection is the interval $[0,u]$, so the total one-dimensional shadow contribution is $V_1=3u$. Each coordinate-plane projection is the union of the two rectangles $[0,u]\times[0,v]$ and $[0,v]\times[0,u]$, hence has area $2uv-v^2$; summing over the three coordinate planes gives $V_2=3(2uv-v^2)$. The three-dimensional term is precisely the hypervolume above. Substituting these into
\[
\Mag(D)=1+\frac{V_1}{2}+\frac{V_2}{4}+\frac{V_3}{8}
\]
gives the claimed formula.
\end{proof}

\myparagraph{Two optimal six-point symmetric configurations}

The next result should be read exactly as stated: it optimizes \emph{within the six-point symmetric orbit family} $P(u,v,w)$.

\begin{theorem}[Hypervolume-optimal and magnitude-optimal symmetric six-point orbits]
Within the symmetric family $P(u,v,w)$ on $\Delta_2$ with $u\ge v\ge w\ge 0$ and $u+v+w=1$, the hypervolume indicator is maximized at
\[
u_{\mathrm{HV}}=\frac{62+5\sqrt{13}}{153}\approx 0.523057,\qquad
v_{\mathrm{HV}}=\frac{43+\sqrt{13}}{153}\approx 0.304611,
\]
\[
w_{\mathrm{HV}}=\frac{48-6\sqrt{13}}{153}\approx 0.172331.
\]
Hence the hypervolume-maximizing symmetric population is the six-point orbit of
\[
(u_{\mathrm{HV}},v_{\mathrm{HV}},w_{\mathrm{HV}}).
\]

Among distinct symmetric six-point orbits, the magnitude indicator is maximized at the boundary point
\[
(u_{\mathrm{Mag}},v_{\mathrm{Mag}},w_{\mathrm{Mag}})
=
\Bigl(\frac79,\frac29,0\Bigr).
\]
Hence the magnitude-maximizing symmetric population is the six-point orbit of
\[
\Bigl(\frac79,\frac29,0\Bigr).
\]
\end{theorem}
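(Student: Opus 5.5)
The plan is to turn both statements into two-variable polynomial optimization problems over a compact triangle, using the two closed formulas proved just above. Eliminate $w=1-u-v$. The admissible region $u\ge v\ge w\ge 0$, $u+v+w=1$ becomes the triangle $T$ in the $(u,v)$-plane with vertices $(1,0)$, $(\tfrac12,\tfrac12)$ and $(\tfrac13,\tfrac13)$. Its three edges are $\{w=0\}$, $\{u=v\}$ and $\{v=w\}$. Write
\[
h(u,v)=\HV(D(u,v,1-u-v)),\qquad g(u,v)=\Mag(D(u,v,1-u-v)).
\]
Both are cubic polynomials, so by compactness each attains its maximum on $T$. The maximizer is either an interior critical point or a critical point of the restriction to one of the three edges, or a vertex. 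The proof is then a finite candidate comparison.

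\textbf{Hypervolume.} On the edge $w=0$ we have $h\equiv 0$, so the maximum is not there. On the edges $u=v$ and $v=w$, $h$ restricts to a one-variable cubic; I will find its critical points and record the edge maxima. In the interior I will solve $\partial_u h=\partial_v h=0$, equivalently the Lagrange system $\nabla\HV=\lambda(1,1,1)$ for $\HV=6uvw-3uw^2-3v^2w+w^3$. Subtracting pairs of equations gives two quadratic relations, for example $6vw-3w^2=6uw-6vw$, i.e.\ $w(4v-2u-w)=0$, so $w=4v-2u$ in the interior. Substituting this together with $u+v+w=1$ into the remaining relation should give a single quadratic whose discriminant involves $13$. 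Its admissible root is the stated $(u_{\mathrm{HV}},v_{\mathrm{HV}},w_{\mathrm{HV}})$. I will verify that this point lies in the interior of $T$ (numerically $0.523>0.305>0.172$) and that its value exceeds the edge maxima, which establishes global maximality.

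\textbf{Magnitude.} Write $g=1+\tfrac32u+\tfrac34(2uv-v^2)+\tfrac18 h$. The structural feature is that on the edge $w=0$ the cubic term vanishes, while the lower-order shadow terms $\tfrac32u+\tfrac34(2uv-v^2)$ still depend on the point. I will carry out the same three-part analysis as for hypervolume. First, compute the interior critical points of $g$ and check whether any lies in $T$; the expectation is that the linear and quadratic shadow terms push the gradient outward, so that none does. Second, maximize on the edges $u=v$ and $v=w$. Third, maximize on the edge $w=0$, $v=1-u$. On that edge,
\[
2uv-v^2=(1-u)(3u-1),\qquad g(u,1-u)=1+\tfrac32u+\tfrac34(-3u^2+4u-1).
\]
The comparison with $\Bigl(\frac79,\frac29,0\Bigr)$ will be made along this edge.

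\textbf{Main obstacle.} The main difficulty is this boundary analysis together with the restriction to \emph{distinct} six-point orbits. That restriction removes the ties $u=v$ and $v=w$, which give only three-point orbits, so the feasible set is not closed and a supremum approached at a vertex need not be attained. A first derivative check on the restriction to $w=0$ gives $g'(u)=\tfrac92(1-u)>0$. This suggests $g$ increases toward the degenerate vertex $(1,0,0)$ rather than peaking at $u=\tfrac79$. So before finalizing, I will recompute the planar projection areas. I also need to confirm whether, for points of the form $(u,v,0)$, the projection onto the plane spanned by the two positive coordinates should be counted with the full $2uv-v^2$, or whether the orbit's permuted copies change the shadow areas on the $w=0$ face. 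Only after that will I locate the edge maximizer, with $u=\tfrac79$ as the expected answer if the corrected formula has its critical point there. Finally, I will check that the value at this point exceeds both the interior candidates and the edge maxima, and that the point gives six distinct orbit points.
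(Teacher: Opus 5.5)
The hypervolume half of your plan is fine and follows the paper's route. The magnitude half cannot be completed, because that part of the statement is false.

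\textbf{Hypervolume.} The relation $w=4v-2u$ together with $u+v+w=1$ gives $u=5v-1$, $w=2-6v$ and $153v^2-86v+12=0$. Its admissible root is the stated point. Your edge checks, which the paper omits, also go through:
\begin{itemize}
\item $h\equiv 0$ on $w=0$;
\item $h=3v^2-8v^3\le\tfrac{1}{16}$ on $v=w$;
\item $h=1-9u+27u^2-26u^3\le 0.057$ on $u=v$.
\end{itemize}
All of these lie below $h\approx 0.0753$ at the interior point.

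\textbf{Magnitude.} Your computation $g(u,1-u)=\tfrac14+\tfrac92u-\tfrac94u^2$ with $g'(u)=\tfrac92(1-u)>0$ is correct. So is the formula behind it. For the orbit of $(u,v,0)$, every axis shadow is $[0,u]$. Every coordinate-plane shadow is $[0,u]\times[0,v]\cup[0,v]\times[0,u]$, since the other projected boxes lie inside it, and its area is $2uv-v^2$. The recheck you plan will therefore confirm the formula, and your step ``locate the edge maximizer at $u=\tfrac79$'' fails.

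Globally, with $u=1-v-w$,
\[
g-\tfrac52=-\tfrac32\,w-\tfrac94\,v^2-\tfrac32\,vw+\tfrac18\,h,
\qquad 0\le h\le 6uvw\le\tfrac32\,w,
\]
where the bound on $h$ uses $w\le u$ and $uv\le\tfrac14$. Hence $g<\tfrac52$ on the whole triangle except at the degenerate vertex $(1,0,0)$, which is the three-point vertex orbit. Among distinct six-point orbits the supremum $\tfrac52$ is approached as $v\to 0^+$ along $w=0$ but is never attained. For instance, $(0.9,0.1,0)$ gives $g=2.4775>\tfrac{43}{18}=g(\tfrac79,\tfrac29,0)$. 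Since $g'=1>0$ at $u=\tfrac79$, the point $(\tfrac79,\tfrac29,0)$ is not even a local maximizer along the edge.

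The paper's own proof makes the corresponding slip. Its ``one-variable quadratic in $v$'' is $\tfrac52-\tfrac94v^2$, which is maximized at $v=0$. The value $v=\tfrac29$ is what results if the axis-shadow term $\tfrac{3u}{2}$ is replaced by $\tfrac{u}{2}$.

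The correct conclusion of your analysis is therefore a disproof of the second half of the statement: magnitude has no maximizer among distinct symmetric six-point orbits. Deferring to a ``corrected formula'' with a critical point at $u=\tfrac79$ is exactly where your proposal fails.
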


\begin{proof}
For hypervolume, substitute $w=1-u-v$ into the cubic formula and solve the stationarity equations in the feasible region $u\ge v\ge w\ge 0$. This yields the interior critical point above, which is the unique feasible maximizer.

For magnitude, substitute $w=1-u-v$ into the magnitude formula. The interior stationarity equations do not produce the best feasible value among distinct six-point symmetric orbits; the maximum is attained on the boundary $w=0$, where the problem reduces to a one-variable quadratic in $v$ with $u=1-v$. Maximizing that boundary expression gives $v=2/9$ and $u=7/9$.
\end{proof}

\begin{remark}
The contrast is instructive. Hypervolume prefers a genuinely three-dimensional interior orbit with all coordinates positive. Magnitude, by contrast, values the lower-dimensional shadows strongly enough that the optimum within this symmetric family moves to the simplex boundary. This is the three-dimensional analogue of the stronger emphasis on extreme points already seen in the planar examples.
\end{remark}

\myparagraph{Explicit coordinate lists}

For hypervolume, the six points are the permutations of
\[
(0.523057,\ 0.304611,\ 0.172331).
\]
For magnitude, the six points are the permutations of
\[
\Bigl(\frac79,\ \frac29,\ 0\Bigr).
\]
That is,
\[
\begin{aligned}
P_{\mathrm{Mag}}=\{&
(7/9,2/9,0),\ (7/9,0,2/9),\ (2/9,7/9,0),\\
&(2/9,0,7/9),\ (0,7/9,2/9),\ (0,2/9,7/9)\}.
\end{aligned}
\]

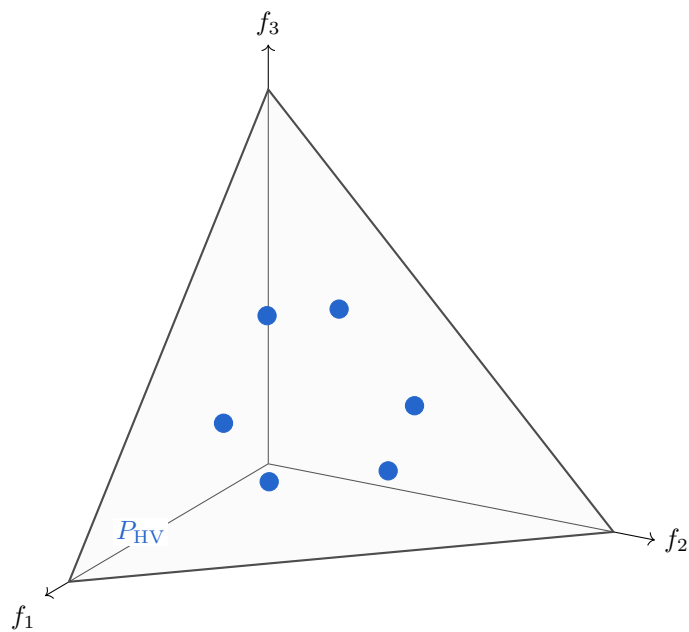
\begin{figure}[p]
\centering
\tdplotsetmaincoords{70}{120}
\begin{tikzpicture}[tdplot_main_coords,scale=5.3]
\coordinate (O) at (0,0,0);
\coordinate (E1) at (1,0,0);
\coordinate (E2) at (0,1,0);
\coordinate (E3) at (0,0,1);

\draw[->,thin] (O) -- (1.12,0,0) node[anchor=north east] {$f_1$};
\draw[->,thin] (O) -- (0,1.12,0) node[anchor=west] {$f_2$};
\draw[->,thin] (O) -- (0,0,1.12) node[anchor=south] {$f_3$};

\filldraw[fill=softgray,draw=black!65,opacity=0.35] (E1) -- (E2) -- (E3) -- cycle;
\draw[black!70,thick] (E1) -- (E2) -- (E3) -- cycle;

\foreach \P in {(0.523057,0.304611,0.172331),
                (0.523057,0.172331,0.304611),
                (0.304611,0.523057,0.172331),
                (0.304611,0.172331,0.523057),
                (0.172331,0.523057,0.304611),
                (0.172331,0.304611,0.523057)}{
  \filldraw[hvblue] \P circle (0.65pt);
}
\node[hvblue,anchor=north,fill=white,inner sep=1pt] at (0.50,-0.08,0) {$P_{\mathrm{HV}}$};
\end{tikzpicture}
\caption{A 3D simplex plot of the hypervolume-optimal symmetric six-point orbit on $\Delta_2$. The six points lie strictly in the interior of the simplex edges and faces, reflecting the genuinely three-dimensional nature of hypervolume maximization.}
\label{fig:3dhv}
\end{figure}

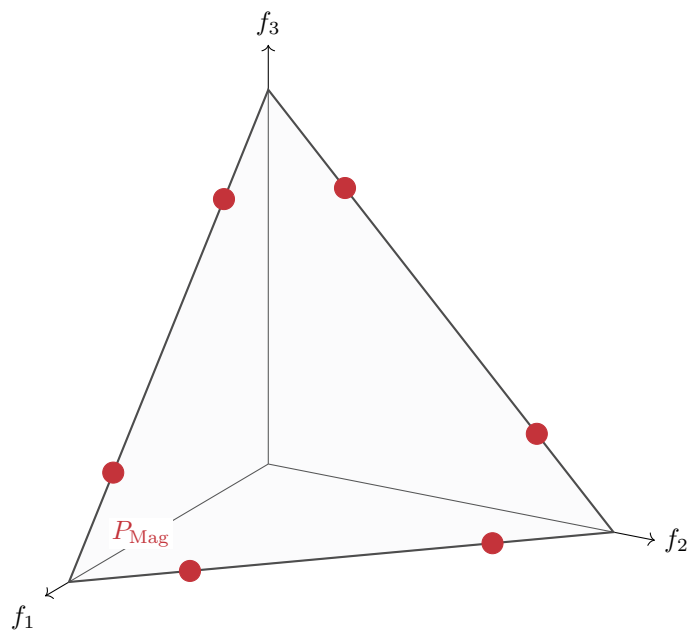
\begin{figure}[p]
\centering
\tdplotsetmaincoords{70}{120}
\begin{tikzpicture}[tdplot_main_coords,scale=5.3]
\coordinate (O) at (0,0,0);
\coordinate (E1) at (1,0,0);
\coordinate (E2) at (0,1,0);
\coordinate (E3) at (0,0,1);

\draw[->,thin] (O) -- (1.12,0,0) node[anchor=north east] {$f_1$};
\draw[->,thin] (O) -- (0,1.12,0) node[anchor=west] {$f_2$};
\draw[->,thin] (O) -- (0,0,1.12) node[anchor=south] {$f_3$};

\filldraw[fill=softgray,draw=black!65,opacity=0.35] (E1) -- (E2) -- (E3) -- cycle;
\draw[black!70,thick] (E1) -- (E2) -- (E3) -- cycle;

\foreach \P in {(0.777778,0.222222,0.000000),
                (0.777778,0.000000,0.222222),
                (0.222222,0.777778,0.000000),
                (0.222222,0.000000,0.777778),
                (0.000000,0.777778,0.222222),
                (0.000000,0.222222,0.777778)}{
  \filldraw[magred] \P circle (0.75pt);
}
\node[magred,anchor=north,fill=white,inner sep=1pt] at (0.50,-0.08,0) {$P_{\mathrm{Mag}}$};
\end{tikzpicture}
\caption{A 3D simplex plot of the magnitude-optimal symmetric six-point orbit on $\Delta_2$. In contrast with hypervolume, the magnitude optimum lies on the simplex boundary, emphasizing lower-dimensional shadow terms.}
\label{fig:3dmag}
\end{figure}

\section{Exact 3D inclusion--exclusion oracles and simplex results}

We next make the three-dimensional computations fully explicit. This section provides exact inclusion--exclusion formulas and exact numerical oracles for the simplex experiments.

\myparagraph{Exact inclusion--exclusion oracles and tie-shared subgradients}

For anchored boxes
\[
D(P)=\bigcup_{p\in P}[0,p]\subseteq [0,1]^3
\]
generated by a finite set $P=\{p^{(1)},\dots,p^{(\mu)}\}\subseteq \Delta_2$, both hypervolume and magnitude admit exact inclusion--exclusion formulas. For every nonempty index set $J\subseteq\{1,\dots,\mu\}$ define the componentwise minimum
\[
m(J)=\min_{j\in J} p^{(j)}=(m_1(J),m_2(J),m_3(J)).
\]
Then
\[
\HV(D(P))
=
\sum_{\emptyset\ne J\subseteq\{1,\dots,\mu\}} (-1)^{|J|+1}\,m_1(J)m_2(J)m_3(J),
\]
and, since the magnitude of one anchored box is
\[
\Mag([0,a]\times[0,b]\times[0,c])=\Bigl(1+\frac a2\Bigr)\Bigl(1+\frac b2\Bigr)\Bigl(1+\frac c2\Bigr),
\]
valuation gives
\[
\Mag(D(P))=
\sum_{\emptyset\ne J\subseteq\{1,\dots,\mu\}} (-1)^{|J|+1}
\prod_{r=1}^3\Bigl(1+\frac{m_r(J)}{2}\Bigr).
\]
For the small three-dimensional simplex populations considered below ($\mu=9$ and $\mu=10$), these formulas are fast enough to serve both as the optimization oracle and as the final verification oracle.

It is also convenient to separate the lower-dimensional contributions:
\[
\Mag(D(P))=
1+\frac{V_1(D(P))}{2}+\frac{V_2(D(P))}{4}+\frac{V_3(D(P))}{8},
\]
with
\[
V_1(D(P))=\max_i p^{(i)}_1+\max_i p^{(i)}_2+\max_i p^{(i)}_3,
\]
\begin{align*}
V_2(D(P))&=\operatorname{area}(\proj_{12} D(P))
+\operatorname{area}(\proj_{13} D(P))
+\operatorname{area}(\proj_{23} D(P)),\\
V_3(D(P))&=\HV(D(P)).
\end{align*}
Each two-dimensional projection area is again an anchored inclusion--exclusion sum.

For the gradient-based runs we use \emph{tie-shared exact subgradients}. For a subset $J$ and coordinate $\alpha\in\{1,2,3\}$ let
\[
M_\alpha(J)=\arg\min_{j\in J} p^{(j)}_\alpha .
\]
Then a symmetric exact subgradient of hypervolume is obtained by distributing the contribution of each intersection term equally over all tied minimizers:
\[
\boxed{\;
\partial_{p^{(i)}_\alpha}\HV(D(P))
=
\sum_{\substack{\emptyset\ne J\subseteq\{1,\dots,\mu\}\\ i\in M_\alpha(J)}}
(-1)^{|J|+1}
\frac{1}{|M_\alpha(J)|}
\prod_{\beta\ne \alpha} m_\beta(J). \;}
\]
Away from ties this reduces to the ordinary derivative. At ties it selects the most symmetric subgradient compatible with the permutation symmetry of the point set.

The same construction applies to magnitude. For the one-dimensional shadow term we share the derivative equally among all maximizers:
\[
\partial_{p^{(i)}_\alpha}V_1(D(P))
=
\begin{cases}
1/|\arg\max_j p^{(j)}_\alpha|, & \text{if }p^{(i)}_\alpha=\max_j p^{(j)}_\alpha,\\[0.5ex]
0, & \text{otherwise.}
\end{cases}
\]
The projection areas are handled by the two-dimensional analogue of the hypervolume subgradient. Consequently,
\[
\boxed{\;
\nabla_{p^{(i)}}\Mag(D(P))
=
\frac12\,\nabla_{p^{(i)}}V_1
+\frac14\bigl(\nabla_{p^{(i)}}A_{12}+\nabla_{p^{(i)}}A_{13}+\nabla_{p^{(i)}}A_{23}\bigr)
+\frac18\,\nabla_{p^{(i)}}\HV, \;}
\]
where $A_{\alpha\beta}$ denotes the exact anchored union area in the corresponding coordinate plane.

In the three-dimensional simplex experiments we update every point by
\[
p^{(i)}_{t+1}
=
\Pi_{\Delta_2}\!\left(p^{(i)}_t+\eta_t\,\widehat g^{(i)}_t\right),
\qquad
\widehat g^{(i)}_t
=
\frac{\Pi_T g^{(i)}_t}{\|\Pi_T g^{(i)}_t\|_2},
\]
whenever $\Pi_T g^{(i)}_t\neq 0$, where $g^{(i)}_t$ is the exact tie-shared subgradient of the chosen indicator and
\[
\Pi_T g = g-\frac{g_1+g_2+g_3}{3}(1,1,1)
\]
is the orthogonal projection onto the simplex tangent space. The step size $\eta_t$ is chosen by backtracking line search and $\Pi_{\Delta_2}$ denotes Euclidean projection onto the simplex. This is the clean projected normalized-gradient scheme used for all recomputed three-dimensional numerical results in the present revision.
\myparagraph{Complete 3D Das--Dennis grids and a closed-form magnitude formula}

For an integer $H\ge 1$, define the complete Das--Dennis grid on the simplex, following the standard simplex-lattice construction associated with Das and Dennis~\cite{DasDennis1998},
\[
G_H=\left\{\left(\frac{i}{H},\frac{j}{H},\frac{k}{H}\right): i,j,k\in\mathbb N_0,\ i+j+k=H\right\}.
\]
Its cardinality is $|G_H|=\binom{H+2}{2}$. Let
\[
U_H=\bigcup_{p\in G_H}[0,p].
\]

\begin{proposition}[staircase structure of $U_H$]
Partition $[0,1]^3$ into the grid cells
\[
C_{abc}=\Bigl[\frac{a-1}{H},\frac{a}{H}\Bigr]\times \Bigl[\frac{b-1}{H},\frac{b}{H}\Bigr]\times \Bigl[\frac{c-1}{H},\frac{c}{H}\Bigr],
\qquad 1\le a,b,c\le H.
\]
Then $C_{abc}\subseteq U_H$ if and only if $a+b+c\le H$.
\end{proposition}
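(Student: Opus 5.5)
The plan is to use that $U_H$ is a finite union of anchored boxes. Membership of a point in $U_H$ can therefore be read off from a single componentwise-dominance condition. Concretely, $x\in[0,1]^3$ lies in $U_H$ if and only if some $p=(i,j,k)/H\in G_H$ satisfies $x\le p$ coordinatewise. Since each cell $C_{abc}$ is itself an anchored-type box with top corner $(a,b,c)/H$, I would prove both directions by looking at that top corner and at interior points of the cell.

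For the direction $a+b+c\le H\Rightarrow C_{abc}\subseteq U_H$, I exhibit a grid point dominating the top corner. Set $p=(a,b,H-a-b)/H$. Because $a,b\ge 1$ and $H-a-b\ge c\ge 1$, this has nonnegative integer numerators summing to $H$, so $p\in G_H$. Every $x\in C_{abc}$ satisfies $x\le (a,b,c)/H\le p$, hence $x\in[0,p]\subseteq U_H$.

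For the converse I argue by contraposition. Suppose $a+b+c\ge H+1$. Take a point $x$ in the interior of $C_{abc}$, for instance its center $x=((a-\tfrac12),(b-\tfrac12),(c-\tfrac12))/H$. If $x\le p=(i,j,k)/H$ with $i+j+k=H$, then $i\ge a-\tfrac12$. Since $i$ is an integer, this forces $i\ge a$, and likewise $j\ge b$ and $k\ge c$. Hence $H=i+j+k\ge a+b+c\ge H+1$, which is a contradiction. So $x\notin U_H$, and therefore $C_{abc}\not\subseteq U_H$.

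The only delicate point, and the one I expect to need care, is this converse. Corner points of $C_{abc}$ may well lie in $U_H$, for example on shared faces with admissible cells. So one must pick a genuinely interior point and use integrality to upgrade the strict fractional inequality to $i\ge a$. Beyond that, I would add a remark that $U_H$ is exactly the union of the admissible cells, since every $[0,p]$ with $p\in G_H$ is tiled by cells $C_{abc}$ with $a\le i$, $b\le j$, $c\le k$ (empty when a numerator is $0$). This is not needed for the stated equivalence but justifies the name ``staircase structure''.
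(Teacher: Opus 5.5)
Your proof is correct and follows essentially the same route as the paper, which reduces coverage of $C_{abc}$ to domination of its upper corner $(a,b,c)/H$ by an integer point of $G_H$ and then uses integrality. The interior-point precaution is unnecessary: since $U_H$ is downward closed, $C_{abc}\subseteq U_H$ holds exactly when the upper corner lies in $U_H$, and for $a+b+c>H$ that corner is never covered, so it serves for the converse just as well as the center does.
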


\begin{proof}
The cell $C_{abc}$ is covered if and only if its upper corner $(a/H,b/H,c/H)$ is dominated by some point $(i/H,j/H,k/H)\in G_H$. This requires $i\ge a$, $j\ge b$, $k\ge c$, and $i+j+k=H$. Such nonnegative integers exist exactly when $a+b+c\le H$.
\end{proof}

\begin{corollary}[exact volume and shadow areas]
For $H\ge 1$,
\[
\HV(U_H)=\frac{\binom{H}{3}}{H^3}=\frac{(H-1)(H-2)}{6H^2},
\]
with the convention $\binom{H}{3}=0$ for $H<3$. Moreover each two-dimensional coordinate projection of $U_H$ is the staircase region under the line $x+y\le 1$ on the same mesh, so
\[
\operatorname{area}(\proj_{12}U_H)=\operatorname{area}(\proj_{13}U_H)=\operatorname{area}(\proj_{23}U_H)=\frac{\binom{H}{2}}{H^2}=\frac{H-1}{2H}.
\]
Finally, each one-dimensional coordinate projection is the full interval $[0,1]$.
\end{corollary}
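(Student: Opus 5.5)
The plan is to derive all three claims from the staircase proposition, using the fact that $U_H$ is a union of closed grid cells up to a null set. I will count cells for the volume and apply the same argument one dimension lower for the projections.

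\emph{Volume.} By the staircase proposition, $C_{abc}\subseteq U_H$ exactly when $a+b+c\le H$ with $a,b,c\ge 1$. First I must check that no other cell meets $U_H$ in positive measure. Suppose $a+b+c>H$. Every $p\in G_H$ has coordinates that are multiples of $1/H$. A point of the open interior of $C_{abc}$ dominated by $p$ would force $i\ge a$, $j\ge b$, $k\ge c$, hence $i+j+k>H$, which is impossible. Therefore $U_H$ equals the union of the covered cells, up to boundaries of measure zero.

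Each cell has volume $H^{-3}$. The number of positive integer triples with $a+b+c\le H$ is $\binom{H}{3}$, by the substitution $a'=a-1$ and stars-and-bars, or as the number of compositions of sums $3\le s\le H$. This gives $\HV(U_H)=\binom{H}{3}/H^3$. Expanding $\binom{H}{3}=H(H-1)(H-2)/6$ yields the stated closed form, which vanishes for $H<3$.

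\emph{Two-dimensional projections.} For the projection onto the coordinates $1,2$, I will use downward closedness: $\proj_{12}U_H=\bigcup_{p\in G_H}[0,p_1]\times[0,p_2]$. The projected points $(i/H,j/H)$ range over all $i+j\le H$, since $k=H-i-j\ge 0$ is free. Rerunning the cell argument in the plane, the cell $[\tfrac{a-1}{H},\tfrac aH]\times[\tfrac{b-1}{H},\tfrac bH]$ is covered iff $a+b\le H$. There are $\binom{H}{2}$ such cells, each of area $H^{-2}$, which gives $\binom{H}{2}/H^2=(H-1)/(2H)$. By the $\mathfrak S_3$ symmetry of $G_H$, the other two planes give the same area.

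\emph{One-dimensional projections.} The vertex $(1,0,0)\in G_H$ and its permutations show that each coordinate projection is $[0,1]$.

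The only delicate step is the measure-zero bookkeeping, namely that partially covered cells contribute nothing. This is handled by the lattice argument above, since all dominating points lie on the $1/H$ grid.
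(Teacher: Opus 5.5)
Your proof is correct and follows the paper's argument. You count the covered grid cells via the staircase proposition ($\binom{H}{3}$ cells of volume $H^{-3}$, then $\binom{H}{2}$ planar cells of area $H^{-2}$), and you read off the one-dimensional projections from the simplex vertices. Your lattice argument that no uncovered cell meets $U_H$ in positive measure makes explicit a step the paper leaves implicit, which is a welcome addition.
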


\begin{proof}
The three-dimensional cells counted in the proposition above are in bijection with positive integer triples $(a,b,c)$ with $a+b+c\le H$, whose number is $\binom{H}{3}$. Each cell has volume $H^{-3}$. The projection statement is the two-dimensional analogue: a projected cell with upper index pair $(a,b)$ is covered if and only if $a+b\le H$, giving $\binom{H}{2}$ covered cells, each of area $H^{-2}$. The interval projections are obvious from the vertices of the simplex grid.
\end{proof}

\begin{theorem}[closed form for complete 3D Das--Dennis grids]
For every integer $H\ge 1$,
\[
\boxed{\;
\Mag(U_H)=\frac52+\frac{3(H-1)}{8H}+\frac{(H-1)(H-2)}{48H^2}.\;}
\]
Equivalently,
\[
\Mag(U_H)=1+\frac{3}{2}+\frac{3}{4}\cdot \frac{H-1}{2H}+\frac{1}{8}\cdot \frac{\binom{H}{3}}{H^3}.
\]
In particular,
\[
\Mag(U_2)=2.6875,
\qquad
\Mag(U_3)=2.7546296296\ldots,
\qquad
\Mag(U_4)=2.7890625.
\]
\end{theorem}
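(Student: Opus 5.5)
The plan is to substitute the data of the corollary on exact volume and shadow areas directly into the all-dimensional projection formula for dominated sets. Since $U_H$ is a finite union of anchored boxes in $[0,\infty)^3$, that theorem gives
\[
\Mag(U_H)=\lambda_0+\tfrac12\sum_{|S|=1}\lambda_1(\proj_S U_H)+\tfrac14\sum_{|S|=2}\lambda_2(\proj_S U_H)+\tfrac18\lambda_3(U_H),
\]
with $\lambda_0=1$ because $U_H$ is nonempty. The proof is then a bookkeeping exercise, organized as follows.

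The three terms come out as follows.
\begin{itemize}[leftmargin=2em]
\item \emph{One-dimensional terms.} By the corollary, each coordinate projection is $[0,1]$, since the vertices $e_1,e_2,e_3$ lie in $G_H$. These three projections contribute $\tfrac12\cdot 3=\tfrac32$.
\item \emph{Two-dimensional terms.} Each of the three coordinate-plane projections has area $\tfrac{H-1}{2H}$. They contribute $\tfrac34\cdot\tfrac{H-1}{2H}=\tfrac{3(H-1)}{8H}$.
\item \emph{Top term.} This is $\tfrac18\cdot\tfrac{(H-1)(H-2)}{6H^2}=\tfrac{(H-1)(H-2)}{48H^2}$.
\end{itemize}
Adding $1+\tfrac32=\tfrac52$ gives the boxed formula. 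The second displayed form is the same sum before simplification.

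The only point needing care is the corollary's counts, which I take as given. Still, I would double-check the identity $\#\{(a,b,c)\in\mathbb Z_{\ge1}^3: a+b+c\le H\}=\binom{H}{3}$ by a stars-and-bars argument with a slack variable. I would also check that it degenerates correctly to $0$ for $H\le 2$, where the staircase proposition gives no full cells. In that case the closed form $(H-1)(H-2)/(6H^2)$ also vanishes, so no case split is needed.

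Similarly, I would confirm the two-dimensional projection claim. The projection of $U_H$ onto coordinates $\{1,2\}$ is $\bigcup_{p\in G_H}[0,p_1]\times[0,p_2]$, and the pairs $(p_1,p_2)$ range over all $(i/H,j/H)$ with $i+j\le H$. This projection is therefore the planar staircase, with $\binom{H}{2}$ full cells. Here the projection-of-union equals union-of-projections, which is immediate.

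Finally, I would verify the numerical values by plugging in. For $H=2$: $\tfrac52+\tfrac3{16}+0=2.6875$. For $H=3$: $\tfrac52+\tfrac14+\tfrac{2}{432}=2.75462\ldots$. For $H=4$: $\tfrac52+\tfrac{9}{32}+\tfrac{6}{768}=2.7890625$. I do not expect a genuine obstacle; the main risk is arithmetic slips in the coefficients $2^{-|S|}$ times the multiplicity of $S$.
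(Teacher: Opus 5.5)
Your proposal is correct and follows the paper's proof: both substitute $V_1(U_H)=3$, $V_2(U_H)=3(H-1)/(2H)$ and $V_3(U_H)=\binom{H}{3}/H^3$ from the corollary into $\Mag(U_H)=1+V_1/2+V_2/4+V_3/8$. The only cosmetic difference is how that expansion is justified: you invoke the all-dimensional projection formula for finite unions of anchored boxes, which is the more direct reference, whereas the paper appeals to contractibility of $U_H$ and its membership in the anchored $\ell_1$ class; your side checks and the values for $H=2,3,4$ are all correct.
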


\begin{proof}
The set $U_H$ is contractible and lies in the anchored $\ell_1$ class discussed above, so
\[
\Mag(U_H)=1+\frac{V_1(U_H)}{2}+\frac{V_2(U_H)}{4}+\frac{V_3(U_H)}{8}.
\]
By the corollary, $V_1(U_H)=3$, $V_2(U_H)=3(H-1)/(2H)$, and $V_3(U_H)=\binom{H}{3}/H^3$. Substituting these values gives the stated formula.
\end{proof}

\begin{remark}
The theorem explains the exact values observed in our runs for the level-$2$ and level-$3$ complete grids. It also shows that complete Das--Dennis grids interpolate between a boundary-heavy regime and a limiting value approaching $2.885416\ldots$ as $H\to\infty$.
\end{remark}

\myparagraph{A projected-stationarity phenomenon for magnitude}

The exact $9$-point and $10$-point experiments suggested something stronger than mere lack of movement: complete Das--Dennis grids appear to be \emph{projected-stationary} for magnitude on the simplex.

\begin{definition}
Let $P\subset\Delta_2$ be a finite point set. We call $P$ \emph{projected-stationary for magnitude} if, for every non-vertex point $p\in P$, every feasible tangent perturbation $d$ with $d_1+d_2+d_3=0$ and $p+\varepsilon d\in\Delta_2$ for small $\varepsilon$, the first variation of
\[
\varepsilon\longmapsto \Mag\bigl(D(P\setminus\{p\}\cup\{p+\varepsilon d\})\bigr)
\]
vanishes at $\varepsilon=0$, while at each simplex vertex every feasible inward perturbation has nonpositive one-sided first variation.
\end{definition}

\myparagraph{Exact numerical evidence.}
Using exact inclusion--exclusion, we checked the following perturbations.

\begin{itemize}[leftmargin=2em]
\item For the centroid $p=(1/3,1/3,1/3)\in G_3$ and tangent direction $d=(1,-1,0)$,
\[
\Mag\bigl(D((G_3\setminus\{p\})\cup\{p+\varepsilon d\})\bigr)-\Mag(U_3)
\approx -\frac{1}{24}\,\varepsilon^2.
\]
For $\varepsilon=10^{-3},10^{-2},5\cdot 10^{-2}$ the observed coefficients $(\Delta \Mag)/\varepsilon^2$ were $-0.0416666674$, $-0.0416666667$, and $-0.0416666667$.

\item For the edge point $p=(1/3,2/3,0)\in G_3$ and the same tangent direction $d=(1,-1,0)$,
\[
\Mag\bigl(D((G_3\setminus\{p\})\cup\{p+\varepsilon d\})\bigr)-\Mag(U_3)
\approx -\frac{1}{4}\,\varepsilon^2.
\]
For the same three step sizes, the observed coefficients $(\Delta \Mag)/\varepsilon^2$ were approximately $-0.25$, $-0.25$, and $-0.25$.

\item For the edge midpoint $p=(1/2,1/2,0)\in G_2$ and tangent direction $d=(1,-1,0)$, the same quadratic law appears:
\[
\Mag\bigl(D((G_2\setminus\{p\})\cup\{p+\varepsilon d\})\bigr)-\Mag(U_2)
\approx -\frac{1}{4}\,\varepsilon^2.
\]

\item For the vertex $v=(1,0,0)\in G_3$ and inward direction $d=(-1,1/2,1/2)$, the change is already negative to first order. Numerically,
\[
\Mag\bigl(D((G_3\setminus\{v\})\cup\{v+\varepsilon d\})\bigr)-\Mag(U_3)
\]
was approximately $-4.17\times 10^{-4}$, $-4.19\times 10^{-3}$, and $-2.14\times 10^{-2}$ for $\varepsilon=10^{-3},10^{-2},5\cdot 10^{-2}$.
\end{itemize}

These checks strongly indicate that the complete grids $G_H$ are stationary in the correct constrained sense: interior and edge points have zero first variation in feasible tangent directions, while vertices satisfy the expected one-sided boundary optimality condition.

\begin{remark}[Conjectural explanation]
A natural proof strategy is to combine valuation with local cancellation. When one non-vertex grid point is perturbed tangentially, only inclusion--exclusion terms involving that point can change. Because the Das--Dennis grid is complete, neighboring anchored boxes exist on both sides of each tangent direction, and the first-order gains and losses in the affected cell pattern cancel. At a simplex vertex no compensating neighbor exists beyond the boundary, so the one-sided linear term is negative. We therefore formulate the following conjecture.
\end{remark}

\begin{conjecture}
For every $H\ge 2$, the complete Das--Dennis grid $G_H$ is projected-stationary for anchored-box magnitude on $\Delta_2$. More precisely:
\begin{enumerate}[leftmargin=2em]
\item if $p\in G_H$ is not a simplex vertex and $d$ is any feasible tangent direction through $p$, then
\[
\Mag\bigl(D((G_H\setminus\{p\})\cup\{p+\varepsilon d\})\bigr)
=
\Mag(U_H)-c_{p,d}\,\varepsilon^2+o(\varepsilon^2)
\]
for some $c_{p,d}\ge 0$;
\item if $v\in G_H$ is a simplex vertex and $d$ is any feasible inward direction, then
\[
\Mag\bigl(D((G_H\setminus\{v\})\cup\{v+\varepsilon d\})\bigr)
=\Mag(U_H)-\gamma_{v,d}\,\varepsilon+o(\varepsilon)
\]
with $\gamma_{v,d}>0$.
\end{enumerate}
\end{conjecture}

This conjecture would explain why the exact magnitude-ascent runs initialized at $G_3$ and at $G_3\setminus\{(1/3,1/3,1/3)\}$ remained at those regular patterns: they are not merely convenient seeds but appear to be intrinsic stationary reference distributions for the magnitude indicator.

\myparagraph{Exact $9$-point and $10$-point runs in 3D}

We return to the simplex front
\[
\Delta_2=\{z\in\R_{\ge 0}^3:\ z_1+z_2+z_3=1\},
\]
with anchor point $r=(0,0,0)$. For $\mu=10$ we initialize the projected ascent at the level-$3$ Das--Dennis grid
\[
\mathcal G_{10}=G_3,
\]
and for $\mu=9$ we remove the centroid,
\[
\mathcal G_9=G_3\setminus\{(1/3,1/3,1/3)\}.
\]
From these starts we run the exact projected normalized-gradient scheme from the preceding subsection, using the tie-shared inclusion--exclusion subgradients of hypervolume or magnitude and a backtracking line search. No finite-difference approximation is used in these three-dimensional runs.

The exact objective values are summarized in Table~\ref{tab:exact3d910}. Hypervolume now moves to fully symmetric terminal populations, while magnitude stays at the Das--Dennis initial patterns. The clean subgradient implementation therefore confirms the qualitative picture and provides the definitive three-dimensional numerical values reported in this paper.

\begin{table}[h]
\centering
\caption{Exact objective values for the $9$-point and $10$-point 3D simplex examples under the projected normalized-gradient method with tie-shared inclusion--exclusion subgradients.}
\label{tab:exact3d910}
\small
\renewcommand{\arraystretch}{1.18}
\begin{tabular}{p{1.7cm}p{3.6cm}p{3.2cm}p{4.2cm}}
\hline
Case & Initialization & Final value & Terminal pattern \\
\hline
$9$-pt HV & $\HV(\mathcal G_9)=0$ & $0.0752901021$ & vertices $+$ 6-point HV orbit \\
$10$-pt HV & $\HV(\mathcal G_{10})=1/27\approx 0.037037$ & $59/729\approx 0.0809327846$ & vertices $+$ centroid $+$ 6-point orbit \\
$9$-pt Mag & $\Mag(\mathcal G_9)=2.75$ & $2.75$ & unchanged Das--Dennis grid \\
$10$-pt Mag & $\Mag(\mathcal G_{10})=2.7546296296$ & $2.7546296296$ & unchanged Das--Dennis grid \\
\hline
\end{tabular}
\normalsize
\end{table}

The terminal hypervolume populations are as follows.

\myparagraph{$9$-point HV population}
The run converges to the three simplex vertices together with the six-point hypervolume-optimal symmetric orbit from Section~\ref{sec:3dexample},
\[
\{(1,0,0),(0,1,0),(0,0,1)\}\cup \mathfrak S_3\cdot(u^*,v^*,w^*),
\]
where
\begin{align*}
u^*&=\frac{62+5\sqrt{13}}{153}\approx 0.523057,\\
v^*&=\frac{43+\sqrt{13}}{153}\approx 0.304611,\\
w^*&=\frac{48-6\sqrt{13}}{153}\approx 0.172331.
\end{align*}
Since the vertex boxes have zero $3$D volume, the resulting hypervolume value is the same as for the six-point orbit:
\begin{align*}
\HV_{9\text{-pt}}&=6u^*v^*w^*-3u^*(w^*)^2-3(v^*)^2w^*+(w^*)^3\\
&\approx 0.0752901021.
\end{align*}

\myparagraph{$10$-point HV population}
The clean projected ascent converges to the centroid together with the three vertices and the six permutations of
\[
\left(\frac{15}{27},\frac{8}{27},\frac{4}{27}\right)
=
(0.555556,0.296296,0.148148).
\]
Thus
\[
P^{\mathrm{HV}}_{10}
=
\{(1,0,0),(0,1,0),(0,0,1),(1/3,1/3,1/3)\}
\cup
\mathfrak S_3\cdot\left(\frac{15}{27},\frac{8}{27},\frac{4}{27}\right),
\]
and exact inclusion--exclusion gives
\[
\HV(P^{\mathrm{HV}}_{10})=\frac{59}{729}\approx 0.0809327846.
\]

\myparagraph{$9$-point and $10$-point magnitude populations}
For both $\mu=9$ and $\mu=10$, no improving step is accepted by the backtracking line search from the Das--Dennis initialization. The terminal populations therefore remain exactly the level-$3$ Das--Dennis patterns introduced above:
\[
\mathcal G_9=G_3\setminus\{(1/3,1/3,1/3)\},
\qquad
\mathcal G_{10}=G_3.
\]

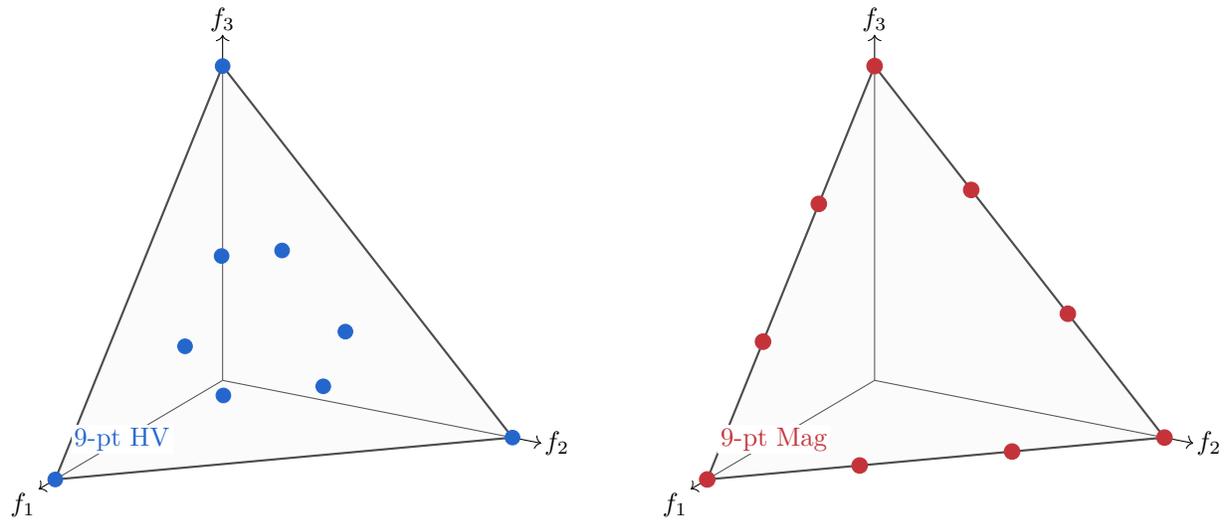
\begin{figure}[p]
\centering
\begin{minipage}{0.475\textwidth}
\centering
\tdplotsetmaincoords{70}{120}
\begin{tikzpicture}[tdplot_main_coords,scale=4.45]
\coordinate (O) at (0,0,0);
\draw[->,thin] (O) -- (1.10,0,0) node[anchor=north east,fill=white,inner sep=1pt] {$f_1$};
\draw[->,thin] (O) -- (0,1.10,0) node[anchor=west,fill=white,inner sep=1pt] {$f_2$};
\draw[->,thin] (O) -- (0,0,1.10) node[anchor=south,fill=white,inner sep=1pt] {$f_3$};
\coordinate (E1) at (1,0,0);
\coordinate (E2) at (0,1,0);
\coordinate (E3) at (0,0,1);
\filldraw[fill=softgray,draw=black!65,opacity=0.35] (E1) -- (E2) -- (E3) -- cycle;
\draw[black!70,thick] (E1) -- (E2) -- (E3) -- cycle;
\foreach \P in {(1,0,0),(0,1,0),(0,0,1),
                (0.523057,0.304611,0.172331),(0.523057,0.172331,0.304611),
                (0.304611,0.523057,0.172331),(0.304611,0.172331,0.523057),
                (0.172331,0.523057,0.304611),(0.172331,0.304611,0.523057)}{
  \filldraw[hvblue] \P circle (0.62pt);
}
\node[hvblue,anchor=north,fill=white,inner sep=1pt] at (0.48,-0.07,0) {$9$-pt HV};
\end{tikzpicture}
\end{minipage}\hfill
\begin{minipage}{0.475\textwidth}
\centering
\tdplotsetmaincoords{70}{120}
\begin{tikzpicture}[tdplot_main_coords,scale=4.45]
\coordinate (O) at (0,0,0);
\draw[->,thin] (O) -- (1.10,0,0) node[anchor=north east,fill=white,inner sep=1pt] {$f_1$};
\draw[->,thin] (O) -- (0,1.10,0) node[anchor=west,fill=white,inner sep=1pt] {$f_2$};
\draw[->,thin] (O) -- (0,0,1.10) node[anchor=south,fill=white,inner sep=1pt] {$f_3$};
\coordinate (E1) at (1,0,0);
\coordinate (E2) at (0,1,0);
\coordinate (E3) at (0,0,1);
\filldraw[fill=softgray,draw=black!65,opacity=0.35] (E1) -- (E2) -- (E3) -- cycle;
\draw[black!70,thick] (E1) -- (E2) -- (E3) -- cycle;
\foreach \P in {(0,0,1),(0,0.333333,0.666667),(0,0.666667,0.333333),(0,1,0),(0.333333,0,0.666667),(0.333333,0.666667,0),(0.666667,0,0.333333),(0.666667,0.333333,0),(1,0,0)}{
  \filldraw[magred] \P circle (0.66pt);
}
\node[magred,anchor=north,fill=white,inner sep=1pt] at (0.48,-0.07,0) {$9$-pt Mag};
\end{tikzpicture}
\end{minipage}
\caption{Exact $9$-point 3D simplex runs from the Das--Dennis initialization with the centroid removed. Hypervolume in blue converges to the three vertices plus the six-point symmetric HV orbit; magnitude in red remains at the Das--Dennis pattern.}
\label{fig:3d9}
\end{figure}

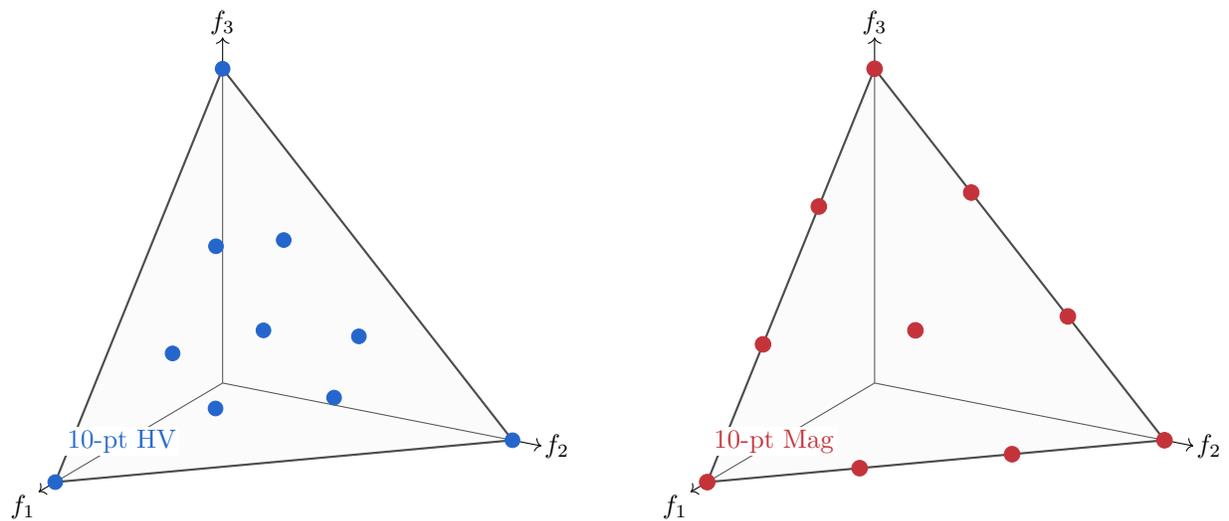
\begin{figure}[p]
\centering
\begin{minipage}{0.475\textwidth}
\centering
\tdplotsetmaincoords{70}{120}
\begin{tikzpicture}[tdplot_main_coords,scale=4.45]
\coordinate (O) at (0,0,0);
\draw[->,thin] (O) -- (1.10,0,0) node[anchor=north east,fill=white,inner sep=1pt] {$f_1$};
\draw[->,thin] (O) -- (0,1.10,0) node[anchor=west,fill=white,inner sep=1pt] {$f_2$};
\draw[->,thin] (O) -- (0,0,1.10) node[anchor=south,fill=white,inner sep=1pt] {$f_3$};
\coordinate (E1) at (1,0,0);
\coordinate (E2) at (0,1,0);
\coordinate (E3) at (0,0,1);
\filldraw[fill=softgray,draw=black!65,opacity=0.35] (E1) -- (E2) -- (E3) -- cycle;
\draw[black!70,thick] (E1) -- (E2) -- (E3) -- cycle;
\foreach \P in {(1,0,0),(0,1,0),(0,0,1),(0.333333,0.333333,0.333333),
                (0.555556,0.296296,0.148148),(0.555556,0.148148,0.296296),
                (0.296296,0.555556,0.148148),(0.296296,0.148148,0.555556),
                (0.148148,0.555556,0.296296),(0.148148,0.296296,0.555556)}{
  \filldraw[hvblue] \P circle (0.62pt);
}
\node[hvblue,anchor=north,fill=white,inner sep=1pt] at (0.48,-0.07,0) {$10$-pt HV};
\end{tikzpicture}
\end{minipage}\hfill
\begin{minipage}{0.475\textwidth}
\centering
\tdplotsetmaincoords{70}{120}
\begin{tikzpicture}[tdplot_main_coords,scale=4.45]
\coordinate (O) at (0,0,0);
\draw[->,thin] (O) -- (1.10,0,0) node[anchor=north east,fill=white,inner sep=1pt] {$f_1$};
\draw[->,thin] (O) -- (0,1.10,0) node[anchor=west,fill=white,inner sep=1pt] {$f_2$};
\draw[->,thin] (O) -- (0,0,1.10) node[anchor=south,fill=white,inner sep=1pt] {$f_3$};
\coordinate (E1) at (1,0,0);
\coordinate (E2) at (0,1,0);
\coordinate (E3) at (0,0,1);
\filldraw[fill=softgray,draw=black!65,opacity=0.35] (E1) -- (E2) -- (E3) -- cycle;
\draw[black!70,thick] (E1) -- (E2) -- (E3) -- cycle;
\foreach \P in {(0,0,1),(0,0.333333,0.666667),(0,0.666667,0.333333),(0,1,0),
                (0.333333,0,0.666667),(0.333333,0.333333,0.333333),(0.333333,0.666667,0),
                (0.666667,0,0.333333),(0.666667,0.333333,0),(1,0,0)}{
  \filldraw[magred] \P circle (0.66pt);
}
\node[magred,anchor=north,fill=white,inner sep=1pt] at (0.48,-0.07,0) {$10$-pt Mag};
\end{tikzpicture}
\end{minipage}
\caption{Exact $10$-point 3D simplex runs from the full level-$3$ Das--Dennis grid. Hypervolume in blue converges to the three vertices, the centroid, and a symmetric six-point orbit. Magnitude in red remains at the complete Das--Dennis grid.}
\label{fig:3d10}
\end{figure}

\section{How to compute the magnitude}

So far we computed magnitude and hypervolume by inclusion--exclusion formulas. This is convenient for analysis, but computationally very inefficient. Therefore we next discuss how magnitude can be computed more efficiently.

Let
\[
A=\{a^{(1)},\dots,a^{(n)}\}\subseteq [0,\infty)^m,
\qquad n:=|A|,
\]
be a finite approximation set for a multiobjective maximization problem, and let
\[
D(A):=\bigcup_{i=1}^{n}[0,a^{(i)}]
\]
be its dominated set anchored at the origin. By the projection formula established above,
\[
\Mag(D(A))
=
1+\sum_{\emptyset\neq S\subseteq [m]}2^{-|S|}\,
\lambda_{|S|}\!\bigl(\pi_S D(A)\bigr),
\]
where \([m]=\{1,\dots,m\}\), \(\pi_S\) denotes projection onto the coordinate subset \(S\), and \(\lambda_{|S|}\) denotes \(|S|\)-dimensional Lebesgue measure.

This formula immediately suggests a practical computational strategy: for every nonempty coordinate subset \(S\subseteq [m]\), one computes the measure of the projected dominated set \(\pi_S D(A)\), and then combines the resulting values with the weights \(2^{-|S|}\). The next proposition shows that each projected term is itself a hypervolume of a projected dominated set.

\begin{proposition}[Projection-based computation of magnitude]
Let \(A=\{a^{(1)},\dots,a^{(n)}\}\subseteq [0,\infty)^m\) and let
\[
D(A):=\bigcup_{i=1}^{n}[0,a^{(i)}]
\]
be its dominated set anchored at the origin. Then, for every nonempty \(S\subseteq [m]\),
\[
\pi_S D(A)=D(\pi_S A),
\]
where \(D(\pi_S A)\) is the dominated set generated by the projected approximation set
\(\pi_S A\subseteq [0,\infty)^{|S|}\). Consequently,
\[
\Mag(D(A))
=
1+\sum_{\emptyset\neq S\subseteq [m]}2^{-|S|}\,
\HV_{|S|}\!\bigl(D(\pi_S A)\bigr),
\]
where \(\HV_{|S|}\) denotes \(|S|\)-dimensional hypervolume.
\end{proposition}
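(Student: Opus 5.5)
The plan is to prove the set identity \(\pi_S D(A)=D(\pi_S A)\) first. The formula then follows by substituting it into the all-dimensional projection formula for magnitude proved earlier. The identity itself comes from two elementary facts: coordinate projection commutes with finite unions, and the projection of an anchored box onto a coordinate subset is again an anchored box.

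\emph{Step 1 (boxes).} For a single point \(a\in[0,\infty)^m\) and nonempty \(S\subseteq[m]\), I would check directly that
\[
\pi_S[0,a]=\prod_{i\in S}[0,a_i]=[0,\pi_S a].
\]
The inclusion ``\(\subseteq\)'' holds because every \(x\in[0,a]\) satisfies \(0\le x_i\le a_i\) for \(i\in S\). For ``\(\supseteq\)'', take \(y\in\prod_{i\in S}[0,a_i]\) and extend it by zeros in the coordinates outside \(S\). The resulting point lies in \([0,a]\), since all coordinates of \(a\) are nonnegative, and it projects to \(y\). The only place where nonnegativity is needed is this zero-padding, which is why we work with the anchor at the origin.

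\emph{Step 2 (unions).} For any map, the image of a union is the union of the images. Hence
\[
\pi_S D(A)=\bigcup_{i=1}^n \pi_S[0,a^{(i)}]=\bigcup_{i=1}^n[0,\pi_S a^{(i)}]=D(\pi_S A).
\]
Here \(\pi_S A\) is regarded as a set, so any coincident projected points simply merge. Duplicates and points that become dominated after projection do not change the union, so no special care is needed.

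\emph{Step 3 (conclusion).} The all-dimensional magnitude formula for dominated sets applies because \(D(A)\) is a finite union of anchored boxes. It gives \(\Mag(D(A))=\sum_{S}2^{-|S|}\lambda_{|S|}(\pi_S D(A))\). The \(S=\emptyset\) term equals \(1\) by the stated convention; this requires \(A\neq\emptyset\), which I would note as an implicit assumption. For every nonempty \(S\), Step 2 gives \(\lambda_{|S|}(\pi_S D(A))=\lambda_{|S|}(D(\pi_S A))\). This is by definition the \(|S|\)-dimensional hypervolume of the projected approximation set with respect to the origin anchor. Substituting these terms yields the stated formula.

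I expect no real obstacle. The one point that needs explicit mention is the zero-extension in Step 1, together with the emptiness convention for \(S=\emptyset\).
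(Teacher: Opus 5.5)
Your proposal is correct and follows the paper's proof: the paper also writes $\pi_S D(A)=\bigcup_{a\in A}\pi_S([0,a])=\bigcup_{a\in A}[0,\pi_S a]=D(\pi_S A)$, then substitutes $\lambda_{|S|}(\pi_S D(A))=\HV_{|S|}(D(\pi_S A))$ into the all-dimensional projection formula. Your zero-padding check for $\pi_S[0,a]=[0,\pi_S a]$ and your remark that the constant term $1$ requires $A\neq\emptyset$ make explicit two points the paper leaves implicit.
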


\begin{proof}
Since
\[
D(A)=\bigcup_{a\in A}[0,a],
\]
we have, for every nonempty \(S\subseteq [m]\),
\[
\pi_S D(A)
=
\bigcup_{a\in A}\pi_S([0,a])
=
\bigcup_{a\in A}[0,\pi_S a]
=
D(\pi_S A).
\]
Hence
\[
\lambda_{|S|}\!\bigl(\pi_S D(A)\bigr)
=
\HV_{|S|}\!\bigl(D(\pi_S A)\bigr),
\]
and substitution into the projection formula yields the claim.
\end{proof}

The preceding proposition shows that magnitude can be computed entirely in terms of hypervolume computations on projected approximation sets. This gives an immediate general complexity bound.

\begin{proposition}[Complexity bound via projected hypervolumes]
Let \(T_{\HV}(k,n)\) denote the complexity of computing the \(k\)-dimensional hypervolume of a dominated set generated by \(n\) points. Then
\[
T_{\Mag}(m,n)\;\le\;\sum_{k=1}^{m}\binom{m}{k}\,T_{\HV}(k,n).
\]
In particular, if lower-dimensional hypervolume computations are asymptotically no harder than the \(m\)-dimensional one, then for fixed \(m\),
\[
T_{\Mag}(m,n)=O\!\bigl((2^m-1)\,T_{\HV}(m,n)\bigr).
\]
\end{proposition}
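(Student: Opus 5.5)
The bound follows from the preceding Proposition (projection-based computation of magnitude), which expresses $\Mag(D(A))$ as $1+\sum_{\emptyset\neq S\subseteq[m]}2^{-|S|}\HV_{|S|}(D(\pi_S A))$. The plan is to read this identity as an algorithm and add up the cost of its steps.

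\textbf{The algorithm.} Enumerate the nonempty coordinate subsets $S\subseteq[m]$. For each $S$:
\begin{enumerate}[leftmargin=2em]
\item form the projected set $\pi_S A$;
\item call a $|S|$-dimensional hypervolume routine on $D(\pi_S A)$;
\item add the result, weighted by $2^{-|S|}$, to a running sum initialised at $1$.
\end{enumerate}
Correctness is immediate from the proposition. Two small points need a remark. First, $\pi_S A$ may contain duplicate or dominated points. These do not change $D(\pi_S A)$, so we can pass all $n$ projected points to the routine. Hence each call is an instance of size at most $n$, and its cost is bounded by $T_{\HV}(|S|,n)$, assuming $T_{\HV}$ is taken as a worst case over inputs of at most $n$ points (monotone in $n$). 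Second, forming $\pi_S A$ costs $O(n|S|)$, and the weighted accumulation is $O(1)$ per subset.

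\textbf{First bound.} Group the subsets by size $k=|S|$; there are exactly $\binom{m}{k}$ of each size. This gives
\[
T_{\Mag}(m,n)\le \sum_{k=1}^m\binom{m}{k}\bigl(T_{\HV}(k,n)+O(nk)\bigr).
\]
The projection overhead $O(nk)$ is absorbed into $T_{\HV}(k,n)$, because any hypervolume algorithm must at least read its input, so $T_{\HV}(k,n)=\Omega(nk)$. This absorption is the only step needing care: it fixes what $T_{\HV}$ is allowed to include. I would state it as the convention that $T_{\HV}(k,n)$ includes reading the input.

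\textbf{Second bound.} Use the hypothesis that lower-dimensional hypervolume is asymptotically no harder, i.e.\ $T_{\HV}(k,n)=O(T_{\HV}(m,n))$ for $k\le m$, with constants depending only on the fixed $m$. Then, using $\sum_{k=1}^m\binom{m}{k}=2^m-1$,
\[
T_{\Mag}(m,n)=O\bigl((2^m-1)\,T_{\HV}(m,n)\bigr).
\]
There is no real obstacle beyond making these bookkeeping conventions explicit.
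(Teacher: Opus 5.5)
Your proposal is correct and takes the same approach as the paper: read the projection formula $\Mag(D(A))=1+\sum_{\emptyset\neq S\subseteq[m]}2^{-|S|}\HV_{|S|}(D(\pi_S A))$ as an algorithm, count the $\binom{m}{k}$ subsets of each size $k$, and use $\sum_{k=1}^m\binom{m}{k}=2^m-1$ for the second bound. Your extra bookkeeping is conventions the paper's two-line proof leaves implicit: treating $T_{\HV}(k,n)$ as a worst case monotone in $n$, noting that duplicates in $\pi_S A$ do no harm, and absorbing the $O(nk)$ projection cost via the input-reading lower bound.
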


\begin{proof}
There are exactly \(\binom{m}{k}\) coordinate subsets \(S\subseteq [m]\) of size \(k\). By the previous proposition, each summand in the magnitude formula is obtained from the hypervolume of the projected dominated set \(D(\pi_S A)\). Summing over all nonempty \(S\subseteq [m]\) gives the stated bound.
\end{proof}

In dimensions two and three, this reduction is especially effective, because only a constant number of projected dominated sets have to be evaluated. Combining this observation with the sharp hypervolume complexity bounds of Beume et al.~\cite{BeumeEtAl2009} yields the following corollary.

\begin{corollary}[Sharp complexity in two and three dimensions]
For \(m=2\) and \(m=3\), the magnitude of a dominated set generated by \(n\) points can be computed in
\[
\Theta(n\log n).
\]
That is,
\[
T_{\Mag}(2,n)=\Theta(n\log n),
\qquad
T_{\Mag}(3,n)=\Theta(n\log n).
\]
\end{corollary}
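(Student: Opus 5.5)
The corollary asserts matching upper and lower bounds, so the plan has two parts: an upper bound from the projection reduction, and a lower bound obtained by reducing hypervolume to magnitude.

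\emph{Upper bound.} I would invoke the preceding proposition: for $m\in\{2,3\}$ there are only $2^m-1\le 7$ nonempty coordinate subsets $S$, so
\[
T_{\Mag}(m,n)\le \sum_{k=1}^{m}\binom{m}{k}T_{\HV}(k,n).
\]
The terms are handled by dimension:
\begin{itemize}
\item For $|S|=1$, the projected hypervolume is just $\max_i a^{(i)}_j$, computable in $O(n)$.
\item For $|S|=2$, sort the projected points and sweep the staircase, which costs $O(n\log n)$.
\item For $|S|=3$, use the sweep-line algorithm with a balanced search tree of Beume et al.~\cite{BeumeEtAl2009}, also $O(n\log n)$.
\end{itemize}
Forming each projected set $\pi_S A$ costs $O(n)$. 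Summing a constant number of such terms gives $O(n\log n)$.

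\emph{Lower bound.} Here I would reduce hypervolume to magnitude, using the $\Omega(n\log n)$ lower bound for hypervolume in the algebraic decision tree model (Beume et al., proved for $m\ge 2$ via a reduction from uniform gap / set-disjointness style problems).

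The key difficulty is that magnitude also contains the lower-dimensional terms, and those must be computable quickly so they can be subtracted off. In dimension $2$,
\[
\HV = 4\Bigl(\Mag-1-\tfrac{X+Y}{2}\Bigr),
\]
and $X,Y$ are computable in $O(n)$. An $o(n\log n)$ magnitude algorithm would therefore give an $o(n\log n)$ hypervolume algorithm, a contradiction.

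In dimension $3$, subtracting off the lower-order terms requires the three planar projection areas, and these themselves cost $\Theta(n\log n)$, so subtraction does not work directly. I would instead embed the 2D hard instance: given planar points $(x_i,y_i)$, form the points $(x_i,y_i,1)$. Every box then has third side length $1$, so $D=D_2\times[0,1]$. The product formula gives
\[
\Mag(D)=\tfrac32\Mag(D_2),
\]
and $\Mag(D_2)$ in turn yields $\HV_2$ in linear extra time. This transfers the planar lower bound to $m=3$.

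The main obstacle is making the lower-bound model precise. One has to state which computational model the $\Omega(n\log n)$ bound lives in and confirm that the reductions above (affine post-processing, $O(n)$ maxima, appending a constant coordinate) stay inside that model. They are linear-time algebraic operations, so this should be fine, but it must be stated explicitly. A trivial $\Omega(n)$ input-reading bound alone would not give $\Theta$.
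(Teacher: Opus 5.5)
Your proposal is correct. Its upper-bound half coincides with the paper's proof: both count the $2^m-1$ projected dominated sets and use the $O(n\log n)$ planar sweep together with the three-dimensional algorithm of Beume et al.

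Where you genuinely differ is the lower bound. The paper only asserts that, because a constant number of projections occur, the complexity of magnitude ``is the same as that of hypervolume,'' and then cites the $\Theta(n\log n)$ hypervolume result. But the projection formula reduces magnitude \emph{to} hypervolume, which gives only $O(n\log n)$ and says nothing about $\Omega(n\log n)$. You supply the missing reduction in the other direction:
\begin{itemize}
\item In the plane, $\HV=4(\Mag-1-(X+Y)/2)$, with $X,Y$ obtained in linear time.
\item In three dimensions, you rightly note that subtracting the planar shadow areas would itself cost $\Theta(n\log n)$. You instead lift $(x_i,y_i)\mapsto(x_i,y_i,1)$, so that $D=D_2\times[0,1]$ and $\Mag(D)=\tfrac32\Mag(D_2)$. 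This identity also follows directly from the projection formula by splitting coordinate subsets according to whether they contain index $3$.
\end{itemize}
Combined with Beume et al.'s $\Omega(n\log n)$ hypervolume lower bound for every $d\ge 2$ (via the uniform gap problem), this yields a real proof of the $\Theta$ claim. That lower bound holds in an algebraic tree model that tolerates your $O(n)$ pre- and post-processing. The paper's argument, by contrast, establishes only the upper side. Your closing remark that the computational model must be fixed explicitly is exactly the point the paper leaves implicit.
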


\begin{proof}
For \(m=2\), magnitude requires one two-dimensional hypervolume computation and two one-dimensional projection terms. For \(m=3\), it requires one three-dimensional hypervolume computation, three two-dimensional projection hypervolumes, and three one-dimensional projection terms. Since the number of projected dominated sets is constant for fixed dimension, the asymptotic complexity is the same as that of hypervolume in dimensions two and three. By Beume et al.~\cite{BeumeEtAl2009}, this complexity is \(\Theta(n\log n)\) in both cases.
\end{proof}

For dimensions \(m\ge 4\), the same projection-based reduction still applies, but the complexity is governed by the best available higher-dimensional hypervolume algorithms. Thus
\[
T_{\Mag}(m,n)\;\le\;\sum_{k=1}^{m}\binom{m}{k}\,T_{\HV}(k,n),
\]
and the complexity of magnitude inherits the best currently available upper bounds for hypervolume in each projected dimension. In particular, the higher-dimensional computation of magnitude lies in the same general complexity regime as hypervolume and related instances of Klee's measure problem; see Chan~\cite{Chan2013}.

\begin{remark}
The previous results show that the complexity of computing magnitude differs from that of hypervolume only through the additional lower-dimensional projection terms. Thus, for fixed dimension, magnitude has the same asymptotic complexity as hypervolume up to a constant factor determined by the number of nonempty coordinate projections. In dimensions two and three, this yields the sharp \(\Theta(n\log n)\) bound via Beume et al.~\cite{BeumeEtAl2009}. In higher dimensions, the reduction places the computation of magnitude in the same general complexity regime as hypervolume and related instances of Klee's measure problem; see Chan~\cite{Chan2013}.
\end{remark}

\section{Possible advantages and disadvantages of magnitude as a set-indicator in Pareto Optimization}

The preceding results suggest that magnitude is neither merely a reformulation of hypervolume nor an automatically superior replacement. It is therefore useful to summarize explicitly what may count as an advantage and what may count as a limitation.

Compared with hypervolume, magnitude offers a richer notion of size for dominated sets. Its main potential advantage is that it does not only measure the top-dimensional dominated volume, but also includes positive lower-dimensional projection and boundary contributions. In particular, approximation points that share one or more coordinates with the anchor point may still contribute positively to magnitude, even when their top-dimensional hypervolume contribution is zero. This makes magnitude more sensitive to extreme and boundary structure and may be advantageous in settings where boundary-including approximations are desirable. More fundamentally, if the aim is to measure all relevant geometric information carried by a finite non-dominated approximation set, rather than only its full-dimensional dominated volume, then magnitude has a stronger conceptual motivation: it treats lower-dimensional shadows and boundary contributions as part of the size of the dominated set. The numerical examples in this paper suggest exactly this behavior: compared with hypervolume, magnitude tends to favor boundary-supported populations and, for suitable cardinalities, regular Das--Dennis-type configurations.

At the same time, this feature can also be viewed as a disadvantage. If the main objective is to reward only full-dimensional dominated volume, then the additional lower-dimensional terms may introduce a boundary bias that is not wanted. Moreover, these extra terms entail an additional computational burden, since magnitude requires the evaluation of lower-dimensional projected dominated sets in addition to the top-dimensional one. Hypervolume also has a direct probabilistic interpretation: for a suitable random goal vector in objective space, its value equals the probability that at least one point of the approximation set weakly attains that goal vector in all objectives; see Emmerich, Deutz, and Yevseyeva~\cite{EmmerichDeutzYevseyeva2014}. For magnitude, such a probabilistic interpretation is no longer immediate and would at least have to be revisited, precisely because lower-dimensional shadow and boundary contributions enter in addition to the full-dimensional dominated volume. Thus, whether magnitude should be preferred over hypervolume depends on whether one regards lower-dimensional boundary contributions as relevant information or as an undesirable distortion of pure volume-based quality assessment, and on whether the probabilistic interpretation of hypervolume is considered essential in the application at hand.

\section{Conclusion}

The results obtained here identify magnitude as a mathematically natural set indicator for finite Pareto front approximations. For origin-anchored dominated sets generated by finite approximation sets, we prove strict Pareto compliance via strict set monotonicity and show that magnitude is induced by a notion of size that records not only the full-dimensional dominated volume but also all lower-dimensional projection contributions. This leads to indicator behavior that differs systematically from hypervolume. In particular, the three-dimensional simplex examples suggest that complete Das--Dennis grids occupy a distinguished position for magnitude, and the observed convergence to such grids for suitable cardinalities points to a structural phenomenon that deserves a theoretical explanation. The projected gradient-based computations reported here provide a first indication of the corresponding magnitude-optimal \(\mu\)-distributions. At the same time, the projection formula implies that, for fixed dimension, the complexity of computing magnitude remains in the same asymptotic regime as that of hypervolume, differing only by the constant factor corresponding to the \(2^d-1\) nonempty coordinate projections.

This perspective places magnitude naturally within a broader, mathematically grounded theory of quality indicators for finite Pareto front approximations.

The next steps are naturally organized around several complementary topics. First, indicator-optimal $\mu$-distributions for magnitude should be studied on structured fronts, in analogy with the hypervolume literature on optimal distributions and reference-point effects~\cite{AugerBaderBrockhoff2010,AugerBaderBrockhoffZitzler2009,Brockhoff2010,IshibuchiImadaMasuyamaNojima2019,IshibuchiNanPang2025,Singh2025}. Second, one should develop dedicated methods for maximizing magnitude, including indicator-based evolutionary algorithms and direct set-based local methods such as projected-gradient, Newton-type, and constrained variants~\cite{EmmerichBeumeNaujoks2005,EmmerichDeutz2014,HernandezSchuetzeWangDeutzEmmerich2018,SosaHernandezSchuetzeEmmerich2014,WangDeutzBackEmmerich2017,WangEtAl2023Constraints,ZitzlerKuenzli2004}. Third, the computation of magnitude itself deserves further study, both algorithmically and analytically: efficient evaluation via projections and inclusion--exclusion, sharper complexity bounds in higher dimension, and explicit formulas or efficient routines for gradients and Hessians, including the sparsity structure induced by lower-dimensional terms~\cite{BeumeEtAl2009,Chan2013,DeutzEmmerichWang2023,GuerreiroFonsecaPaquete2021}. Finally, it would be valuable to study the magnitude of finite sets directly, since that viewpoint may be relevant for discrete optimization problems with integer-valued objective functions and for diversity enhancement beyond dominated-box geometry~\cite{Huntsman2023}.

\end{document}